\documentclass[12pt]{amsart}
\usepackage{amsthm,amssymb,amsmath,hyperref, cite}
\usepackage{color, mdframed, tikz}
\usepackage{graphicx,enumerate}
\pdfoutput=1
\usepackage[margin=1in]{geometry}
\allowdisplaybreaks
\usetikzlibrary{decorations.pathreplacing,calc}

\makeatletter
\def\l@section{\@tocline{1}{12pt plus2pt}{0pt}{}{\bfseries}}
\def\l@subsection{\@tocline{2}{0pt}{2pc}{2pc}{}}
\makeatother
\makeatletter
\def\subsection{\@startsection{subsection}{2}{\z@}%
	{-3.25ex\@plus -1ex \@minus -.2ex}%
	{1.5ex \@plus .2ex}%
	{\normalfont\bfseries\boldmath}}
\def\subsubsection{\@startsection{subsubsection}{3}%
	\z@{.5\linespacing\@plus.7\linespacing}{-.5em}%
	{\normalfont\bfseries\boldmath}}
\renewcommand\paragraph{\@startsection{paragraph}{4}{\z@}%
	{3.25ex \@plus1ex \@minus.2ex}%
	{-1em}%
	{\normalfont\normalsize\bfseries}}

    \makeatletter
\newcommand{\widecheck}[1]{\mathpalette\widecheck@{#1}}
\newcommand{\widecheck@}[2]{%
  \begingroup
  \setbox0=\hbox{$\m@th#1#2$}%
  \setbox2=\hbox{$\m@th#1\widehat{\vphantom{#2}\hphantom{#2}}$}%
  \mathord{%
    \ooalign{%
      \hfil
      \raisebox{\dimexpr\ht2+\ht0+.15ex\relax}{%
        \scalebox{1}[-1]{\copy2}%
      }%
      \hfil\cr
      \hfil$\,\m@th#1#2\,$\hfil\cr
    }%
  }%
  \endgroup
}
\makeatother

\definecolor{myblue}{RGB}{45,70,200}
\definecolor{lightblue}{RGB}{220,232,248}
\definecolor{mygreen}{RGB}{40,140,60}
\definecolor{lightgreen}{RGB}{222,239,222}
\definecolor{myred}{RGB}{200,50,40}
\definecolor{lightred}{RGB}{248,225,225}

\allowdisplaybreaks

\theoremstyle{plain}
\newtheorem{thm}{Theorem}[section]

\newtheorem{lem}[thm]{Lemma}
\newtheorem{prop}[thm]{Proposition}

\theoremstyle{definition}
\newtheorem{defn}[thm]{Definition}

\theoremstyle{remark}
\newtheorem{rem}[thm]{Remark}

\theoremstyle{plain}

\numberwithin{equation}{section}

\theoremstyle{plain} 
\newcommand{\thistheoremname}{}
\newtheorem{genericthm}[thm]{\thistheoremname}

  \newtheorem*{genericthm*}{\thistheoremname}
\newenvironment{namedthm*}[1]
  {\renewcommand{\thistheoremname}{#1}%
   \begin{genericthm*}}
  {\end{genericthm*}}

\newcommand{\del}{\delta}

\newcommand{\diag}{\textnormal{diag}}

\newcommand{\D}{{\mathbb D}}

\newcommand{\C}{{\mathbb C}}

\newcommand{\N}{{\mathbb N}}

\newcommand{\calX}{{\mathcal X}}

\newcommand{\calB}{{\mathcal B}}
\newcommand{\calC}{{\mathcal C}}
\newcommand{\calD}{{\mathcal D}}

\newcommand{\calQ}{{\mathcal Q}}
\newcommand{\calR}{{\mathcal R}}

\newcommand{\calT}{{\mathcal T}}

\makeatletter
\newcommand{\vast}{\bBigg@{4}}
\newcommand{\Vast}{\bBigg@{5}}
\makeatother

\newcommand{\frakB}{{\mathfrak B}}

\usepackage{scalerel}[2014/03/10]
\usepackage[usestackEOL]{stackengine}
\def\intavg{\,\ThisStyle{\ensurestackMath{%
    \stackinset{c}{0\LMpt}{c}{0\LMpt}{\SavedStyle-}{\SavedStyle\phantom{\int}}}%
    \setbox0=\hbox{$\SavedStyle\int\,$}\kern-\wd0}\int}

\def\udot#1{\ifmmode\oalign{$#1$\crcr\hidewidth.\hidewidth
    }\else\oalign{#1\crcr\hidewidth.\hidewidth}\fi}

\def\T{\mathbb{T}}
\def\C{\mathbb{C}}

\def\beq{\begin{equation}}
\def\eeq{\end{equation}}
\makeatletter
\newcommand{\doublewidetilde}[1]{{%
  \mathpalette\double@widetilde{#1}%
}}
\newcommand{\double@widetilde}[2]{%
  \sbox\z@{$\m@th#1\widetilde{#2}$}%
  \ht\z@=.9\ht\z@
  \widetilde{\box\z@}%
}
\makeatother

\def\one{\mbox{1\hspace{-4.25pt}\fontsize{12}{14.4}\selectfont\textrm{1}}}

\usepackage{tikz}

\makeatletter
\def\@makefnmark{%
  \leavevmode
  \raise.9ex\hbox{\fontsize\sf@size\z@\normalfont\tiny\@thefnmark}}
\makeatother

\begin{document}
	
\title[]{On the Bloch and $\calQ_p$--Carleson measure problems}

\author{Bingyang Hu}
\address{(Bingyang Hu) Department of Mathematics and Statistics\\
        Auburn University\\
        Auburn, Alabama, U.S.A, 36849}
\email{bzh0108@auburn.edu}

\author{Xiaojing Zhou}
\address{(Xiaojing Zhou) Department of Mathematics and Statistics\\
         Auburn University\\
         Auburn, Alabama, U.S.A, 36849}
\email{xiz0003@auburn.edu}

\begin{abstract}
In this paper, we study the Bloch and $\mathcal Q_p$--Carleson measure problems
on the unit disc $\mathbb D$. In the Bloch case, for a positive Borel
measure $\mu$ on $\mathbb D$, we give a complete characterization of the
boundedness and compactness of the embedding
$$
        \operatorname{id}:\mathcal B \longrightarrow L^2(\mu)
$$
in terms of the Bloch capacity $\mathfrak B_{\mathcal R}(\mu)$ associated with
an admissible dyadic resolution $\mathcal R$ of $\mathbb D$. The proof is based
on the Bergman projection representation of Bloch functions, conditional
expectations on admissible dyadic resolutions, and a finite-dimensional
semidefinite programming argument. We also adapt this dyadic framework to the
more general $\mathcal Q_p$--Carleson measure problem and obtain a
corresponding complete boundedness and compactness characterization for
$$
        \operatorname{id}:\mathcal Q_p \longrightarrow L^2(\mu),
        \qquad 0<p\le1.
$$
This work further develops the dyadic approach introduced in our recent work
on composition operators on $\mathcal Q_p$ spaces, but in a different setting
where the embedding involves recovering function values from derivative
information.
\end{abstract}

\date{\today}

\subjclass[2010]{ 30H30, 30H25, 30H35, 47B38, 42B35}
\keywords{Bloch space, Bloch--Carleson measures, $\calQ_p$-space, $\calQ_p$--Carleson measures, Bloch capacity, $\calQ_p$-capacity}

\maketitle
\tableofcontents

\section{Introduction} 

Let $\D$ be the unit disc, let $dA(z)=(1/\pi)dxdy$ denote normalized area
measure, and let $H(\D)$ be the collection of all holomorphic functions on
$\D$ equipped with the compact--open topology. In this paper, we study the
Bloch and $\calQ_p$--Carleson measure problems. We begin with the Bloch case.

The \emph{Bloch space} $\calB$ consists of all $f\in H(\D)$ such that
$$
        \|f\|_{\calB}:=|f(0)|+\|f\|_{\calB,*}<\infty,
        \qquad \textnormal{with} \quad
        \|f\|_{\calB,*}:=\sup_{z\in\D}(1-|z|^2)|f'(z)|.
$$
The \emph{Bloch--Carleson measure problem} asks for necessary and sufficient
conditions on a positive Borel measure $\mu$ such that the embedding
\begin{equation} \label{20260614eq01}
\operatorname{id}:\calB \longrightarrow L^2(\mu)
\end{equation}
is bounded, or compact, where $L^2(\mu)$ denotes the weighted $L^2$-space
associated with $\mu$.

The Bloch--Carleson measure problem has a long history and appears to be
considerably subtler than the corresponding Carleson measure problems for
Hardy and Bergman spaces. In \cite{GirelaPelaezPerezGonzalezRattya2008},
Girela, Pel\'aez, P\'erez-Gonz\'alez and R\"atty\"a studied the more general
$p$--Bloch--Carleson measure problem, namely the boundedness of the embedding
$\calB\subset L^p(\mu)$ for $0<p<\infty$. They proved that the logarithmic
moment condition
\begin{equation} \label{20260615eq10}
        \int_{\D}\left(\log\frac{1}{1-|z|}\right)^p\,d\mu(z)<\infty
\end{equation} 
is sufficient, while the weaker condition
$$
        \int_{\D}\left(\log\frac{1}{1-|z|}\right)^{p/2}\,d\mu(z)<\infty
$$
is necessary. They also showed that neither condition gives a complete
characterization in general, although complete answers were obtained for
several special classes of measures, including certain radial and discrete
measures. More recently, Bao, Du, Wulan and Zhu \cite{BaoDuWulanZhu2024}
also emphasized that the characterization of Bloch--Carleson measures remains
open.

Thus, despite these important partial results and equivalent formulations, a
complete characterization of Bloch--Carleson measures in terms of the measure
$\mu$ alone has remained \emph{unavailable}.  The \emph{goal} of this paper is to give a complete characterization of the Bloch--Carleson measure problem \eqref{20260614eq01} from the viewpoint of dyadic harmonic analysis. Our approach is motivated by recent work of the first and second authors \cite{HuZhou2026} on composition
operators on $\calQ_p$ spaces\footnote{For the definition of the $\calQ_p$ spaces, see \eqref{20260615DefnQp}.}, which resolved a longstanding problem in the theory of $\calQ_p$ spaces. 

\vspace{0.1cm}

Observe that since constant functions belong to $\calB$, the assumption
$\mu(\D)<+\infty$ is necessary for the boundedness of \eqref{20260614eq01}.
Thus the essential part of the Bloch--Carleson measure problem is the
seminorm estimate. To this end, set
$$
        \mathring{\calB}:=\{f\in\calB:f(0)=0\}.
$$
It remains to characterize those positive Borel measures $\mu$ for which
$$
        \int_\D |f(z)|^2\,d\mu(z)
        \lesssim
        \|f\|_{\calB,*}^2,
        \qquad f\in\mathring{\calB}.
$$

\vspace{0.1cm}

The \emph{key} idea in our approach is to introduce a dyadic capacity adapted to
the embedding \eqref{20260614eq01}. Now we turn to some details. We first define the dyadic systems on $\D$ we need.

\begin{defn} \label{dyadicresolution}
An \emph{admissible dyadic resolution} in $\D$ is a sequence of decompositions
\[
        \calR=\{\calR_N\}_{N\ge 0}
\]
with the following properties.

\begin{enumerate}
\item[(a)] Each $\calR_N$ is a finite measurable partition of $\D$; 
\item[(b)] The partitions are nested: every tile in $\calR_{N+1}$ is contained, modulo area zero, in a tile of $\calR_N$;
\item[(c)] The associated conditional expectations
\[
        E_N\psi:=\sum_{ R\in\calR_N}\left(\frac{1}{A(R)}\int_R\psi dA\right)\one_R
\]
converge to $\psi$ in $L^1(\D)$ for every $\psi\in L^1(\D)$;
\item[(d)] For each $N\ge0$, an arbitrary ordering of the finite set $\calR_N$ is fixed once and for all. 
\end{enumerate}
\end{defn}

The existence of admissible dyadic resolutions in $\D$ is straightforward; for later use, we give one explicit construction in Section~\ref{Sec2.2}.

\begin{rem}\label{rem:why_full_resolution}
We make a comment on the choice of the dyadic system used in this paper. This
choice is different from the one used in our earlier work \cite{HuZhou2026},
and the reason comes from the nature of the embedding problem.

In \cite{HuZhou2026}, the boundedness and compactness of composition operators
on $\calQ_p$ spaces were reduced to derivative trace estimates. At the local
level, the relevant information has the form $f'\mapsto f'$ on Carleson tents.
Thus the upper halves of dyadic Carleson tents provide a natural dyadic
decomposition for that problem, in accordance with the inherited geometry underlying the Bergman kernel.

The Bloch--Carleson measure problem \eqref{20260614eq01} is different. Although
the Bloch seminorm is defined in terms of $f'$, the embedding itself asks us to
control $f$. In other words, one has to account for the passage
$$
        f'\mapsto f,\qquad
        f(z)-f(0)=\int_0^z f'(\xi)\,d\xi .
$$

From a dyadic point of view, this passage is not captured by looking only at
the upper half of the dyadic tent containing $z$. Instead, one has to keep
track of how the derivative pieces accumulate along paths, or equivalently
along chains in an underlying dyadic structure.

This viewpoint is consistent with the classical tree approach to Carleson
measures for analytic Besov spaces, where the accumulation along chains is
modeled by a Hardy operator on a tree; see
\cite{ArcozziRochbergSawyer2002}. In the present paper we take a
different but related route: we use admissible dyadic resolutions of the whole
disc and discretize the reduced Bergman projection ${\bf P}_0$ through conditional
expectations of the source $\psi$. This leads to the dyadic capacities used
below.
\end{rem}

\vspace{0.1cm}

Having fixed an admissible dyadic resolution $\calR=\{\calR_N\}_{N\ge0}$ of $\D$, we now turn to the definition of the associated dyadic capacity. Let
$$
K(z,w):=\frac{1}{(1-z\overline w)^2},
\qquad z,w\in\D,
$$
be the standard \emph{Bergman kernel} on $\D$, and ${\bf P}$ be the standard \emph{Bergman projection}. Set
$$
K_0(z,w):=K(z,w)-K(0,w)
        =
        \frac{1}{(1-z\overline w)^2}-1.
$$
For $\psi\in L^\infty(dA)$, define the \emph{reduced Bergman projection}
\begin{equation}\label{eq:P0}
        {\bf P}_0\psi(z):=\int_{\D}\psi(w)K_0(z,w)\,dA(w).
\end{equation}
Then ${\bf P}_0 \psi$ is holomorphic on $\D$ and satisfies ${\bf P}_0 \psi(0)=0$. 

\vspace{0.1cm}

Next, for each $N\ge 0$ and $R\in\calR_N$, define the \emph{Bergman packet}
$$
        \Phi_R(z):=\left({\bf P}_0  \one_R \right)(z)=\int_R K_0(z,w)\,dA(w), \qquad z\in\D.
$$
Thus $\Phi_R$ is holomorphic on $\D$ and satisfies $\Phi_R(0)=0$. We define the
\emph{Bergman packet matrix associated with $\mu$ at level $N$} by
\begin{equation} \label{20260614eq13}
        \Gamma_{\mu,N}
        :=
        \left\{\Gamma_{\mu,N}(R_1,R_2)\right\}_{R_1,R_2\in\calR_N},
\end{equation} 
where
$$
        \Gamma_{\mu,N}(R_1,R_2)
        :=
        \int_\D \Phi_{R_1}(z)\overline{\Phi_{R_2}(z)}\,d\mu(z),
        \qquad R_1,R_2\in\calR_N.
$$
Here the matrix is ordered according to the fixed ordering of the finite set
$\calR_N$. We record the following basic properties of $\Gamma_{\mu,N}$ in the nontrivial
case where its entries are finite:

\begin{enumerate}
    \item $\Gamma_{\mu, N}$ is a complex matrix of dimension $\#\calR_N\times \#\calR_N$, where $\# \calR_N$ denotes the cardinality of $\calR_N$; 
    \item $\Gamma_{\mu, N}$ is Hermitian, since
    $$
    \Gamma_{\mu, N}(R_1, R_2)=\overline{\Gamma_{\mu, N}(R_2, R_1)}, \qquad R_1, R_2 \in \calR_N; 
    $$
    \item $\Gamma_{\mu, N}$ is positive semidefinite. Indeed, take any $c \in \C^{\# \calR_N}$, then 
    \begin{align*}
    c^*\Gamma_{\mu, N}c
    &=\sum_{R_1, R_2 \in \calR_N} \overline{c_{R_1}} c_{R_2} \Gamma_{\mu, N}(R_1, R_2) \\
    &=\sum_{R_1, R_2 \in \calR_N}  \int_{\D} \overline{c_{R_1}} c_{R_2} \Phi_{R_1}(z) \overline{\Phi_{R_2}(z)} d\mu(z) \\
    &= \int_{\D} \left| \sum_{R \in \calR_N} \overline{c_R} \Phi_R(z) \right|^2 d\mu(z) \ge 0. 
    \end{align*}
   Here and throughout, for any complex vector $c$, we denote by $c^*$ its
conjugate transpose.
\end{enumerate}

Moreover, given $J\in\N$ and a finite nonnegative sequence $\lambda=\{\lambda_j\}_{1\le j\le J}$, we denote by 
$$ 
\diag(\lambda) = \diag(\lambda_1,\ldots,\lambda_J) 
$$ 
the $J\times J$ diagonal matrix whose $j$-th diagonal entry is $\lambda_j$.

\begin{defn}
Let $\calR=\{\calR_N\}_{N\ge 0}$ be an admissible dyadic resolution of $\D$, let $\mu$ be a finite positive Borel measure on $\D$, and let $\Gamma_{\mu,N}$ be the Bergman packet matrix defined in \eqref{20260614eq13}. The \emph{Bloch capacity of $\mu$ associated with $\calR$} is defined by
\begin{equation}\label{20260614eq14}
\frakB_{\calR}(\mu)
:=
\sup_{N\ge 0}
\inf\left\{
        \sum_{Q\in\calR_N}\lambda_Q:
        \lambda_Q\ge0,\quad
        \Gamma_{\mu,N}
        \le
        \diag\left(\{\lambda_Q\}_{Q\in\calR_N}\right)
\right\}.
\end{equation}
Here the diagonal matrix
$$
        \diag\left(\{\lambda_Q\}_{Q\in\calR_N}\right)
$$
is ordered according to the fixed ordering of the finite set $\calR_N$. The matrix inequality in \eqref{20260614eq14} means 
$$
        \diag\left(\{\lambda_Q\}_{Q\in\calR_N}\right)
        -
        \Gamma_{\mu,N}
$$
is positive semidefinite, that is, 
$$
        \sum_{R_1,R_2\in\calR_N}
        \Gamma_{\mu,N}(R_1,R_2)\xi_{R_1}\overline{\xi_{R_2}}
        \le
        \sum_{R \in\calR_N}\lambda_R|\xi_R|^2
$$
for every complex sequence $\xi=\{\xi_R\}_{R\in\calR_N}$.
\end{defn}

We are now ready to state the first main result of the paper. 

\begin{thm}[Boundedness]\label{boundedness}
Let $\mu$ be a finite positive Borel measure on $\D$. Then the following statements are equivalent.

\begin{enumerate}
\item[(i)] The seminorm embedding
\begin{equation}\label{20260614eq20}
        \int_{\D}|f(z)|^2\,d\mu(z)
        \le C\|f\|_{\calB,*}^2,
        \qquad f\in\mathring{\calB},
\end{equation}
is bounded for some constant $C>0$.

\vspace{0.1cm}

\item[(ii)] For every admissible dyadic resolution
$\calR=\{\calR_N\}_{N \ge 0}$ of $\D$, one has
\begin{equation} \label{20260614eq21}
        \frakB_{\calR}(\mu)<\infty.
\end{equation} 
\end{enumerate}

Moreover, if $C_{\mathring{\calB}}(\mu)$ denotes the best constant in
\eqref{20260614eq20}, then, for every admissible dyadic resolution $\calR$ in $\D$,
$$
        \frakB_{\calR}(\mu)\simeq C_{\mathring{\calB}}(\mu).
$$
Consequently,
$$
        id: \calB\longrightarrow L^2(\mu) \ \textrm{is bounded}
        \quad\Longleftrightarrow\quad
        \mu(\D)+\frakB_{\calR}(\mu)<\infty
$$
for every admissible dyadic resolution $\calR$ of $\D$.
\end{thm}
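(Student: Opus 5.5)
The plan is to reduce both directions to one extremal quantity,
$$
M_N(\mu):=\sup\Big\{c^*\Gamma_{\mu,N}c:\ c\in\C^{\#\calR_N},\ |c_R|\le 1\Big\}
=\sup\Big\{\int_\D|{\bf P}_0\psi|^2d\mu:\ \psi=\sum_{R\in\calR_N}\overline{c_R}\one_R,\ |c_R|\le1\Big\}.
$$
The identity holds because $\sum_R\overline{c_R}\Phi_R={\bf P}_0\big(\sum_R \overline{c_R}\one_R\big)$, by the computation in item (3) above. I would then prove two comparisons: that $\sup_N M_N(\mu)\simeq C_{\mathring\calB}(\mu)$, and that the inner infimum in \eqref{20260614eq14} is comparable to $M_N(\mu)$ with absolute constants.

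\textbf{Analytic step.} First I use the Bergman projection representation of Bloch functions.
\begin{itemize}
\item For every $\psi\in L^\infty$ one has $\|{\bf P}_0\psi\|_{\calB,*}\lesssim\|\psi\|_\infty$. This follows from the standard estimate $\int_\D|1-z\overline w|^{-3}dA(w)\lesssim(1-|z|^2)^{-1}$.
\item Conversely, every $f\in\mathring\calB$ can be written $f={\bf P}_0\psi$ with $\|\psi\|_\infty\lesssim\|f\|_{\calB,*}$. For instance, take $\psi(w)=c(1-|w|^2)f'(w)\overline{w}^{-1}$ away from a small disc, suitably corrected near $0$; equivalently, use the known formula $f(z)-f(0)=\int_\D (1-|w|^2)f'(w)\,\frac{\partial}{\partial\overline{?}}\cdots$ from Zhu's book, adjusted so that ${\bf P}_0$ appears.
\end{itemize}
Hence
$$
C_{\mathring\calB}(\mu)\simeq\sup\Big\{\int|{\bf P}_0\psi|^2d\mu:\|\psi\|_\infty\le1\Big\}.
$$

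\textbf{(i)$\Rightarrow$(ii).} Assume (i). Each $\Phi_R$ lies in $\mathring\calB$, so the entries of $\Gamma_{\mu,N}$ are finite. Step functions have $\|\psi\|_\infty\le1$, so the display above gives $M_N(\mu)\lesssim C_{\mathring\calB}(\mu)$.

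\textbf{(ii)$\Rightarrow$(i).} Take $f={\bf P}_0\psi$ with $\|\psi\|_\infty\le1$, and set $\psi_N:=E_N\psi$. Then $\psi_N$ is a level-$N$ step function with $\|\psi_N\|_\infty\le1$, and $\psi_N\to\psi$ in $L^1$ by Definition \ref{dyadicresolution}(c). For fixed $z$ the kernel $K_0(z,\cdot)$ is bounded, so ${\bf P}_0\psi_N(z)\to f(z)$ pointwise. Fatou's lemma then gives
$$
\int|f|^2d\mu\le\liminf_N\int|{\bf P}_0\psi_N|^2d\mu\le\sup_N M_N(\mu).
$$

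\textbf{Matrix step (main obstacle).} This is where semidefinite programming duality and a Grothendieck-type inequality enter. Let $\beta_N$ be the infimum in \eqref{20260614eq14}.
\begin{itemize}
\item \emph{Lower bound $M_N\le\beta_N$.} If $\Gamma_{\mu,N}\le\diag(\lambda)$, then $c^*\Gamma c\le\sum\lambda_R|c_R|^2\le\sum\lambda_R$.
\item \emph{Upper bound $\beta_N\lesssim M_N$.} By finite-dimensional SDP duality (strong duality holds since $\lambda_R$ large is strictly feasible), $\beta_N=\max\{\operatorname{tr}(\Gamma_{\mu,N}X):X\ge0,\ X_{RR}\le1\}$. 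Write $X_{R_1R_2}=\langle u_{R_1},u_{R_2}\rangle$ with unit-ball vectors $u_R$. The goal is $\operatorname{tr}(\Gamma X)\le\frac4\pi\sup_{|c_R|\le1}c^*\Gamma c$ for positive semidefinite $\Gamma$; this is the complex positive-semidefinite Grothendieck inequality (Nesterov/Rietz type).
\end{itemize}
If I wanted a self-contained route instead, I would try random rounding. Pick a complex Gaussian vector $g$ and set $c_R=\langle u_R,g\rangle/|\langle u_R,g\rangle|$. Then compute $\mathbb E\,c^*\Gamma c$ via the known expansion of $\mathbb E[\mathrm{sgn}\,\mathrm{sgn}]$ as a power series with nonnegative coefficients in $\langle u_{R_1},u_{R_2}\rangle$. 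Positive semidefiniteness of $\Gamma$, together with Schur products, lets me keep only the linear term, which has coefficient $\pi/4$.

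Combining the three steps gives $\frakB_\calR(\mu)=\sup_N\beta_N\simeq\sup_NM_N\simeq C_{\mathring\calB}(\mu)$, for every admissible $\calR$. The final equivalence then follows because constants force $\mu(\D)<\infty$, and $|f(0)|^2\mu(\D)$ is controlled trivially.
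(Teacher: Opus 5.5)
Your proposal is correct and follows the paper's proof essentially step for step. Your $M_N(\mu)$ is the paper's $\calC_{\calR,N}(\mu)$, your analytic step and $E_N$/Fatou argument are Lemmas~\ref{20260614lem01}--\ref{20260614lem02} and Theorem~\ref{boundednessPartI}, and your matrix step is the same SDP duality plus a Grothendieck-type bound. The only variation there is that you invoke the positive-semidefinite complex Grothendieck inequality (constant $4/\pi$) directly. The paper instead applies the general complex Grothendieck inequality to $B(\alpha,\beta)$ and then uses Cauchy--Schwarz for the positive semidefinite form to return to $B(\alpha,\alpha)$.

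Two loose ends need tidying. First, the representation formula you left unfinished should read $f-f(0)={\bf P}_0\bigl[(1-|w|^2)f'(w)/\overline{w}\bigr]$, obtained by integrating $f'(z)=2\int_\D(1-|w|^2)f'(w)(1-z\overline{w})^{-3}\,dA(w)$ from $0$ to $z$. The $1/\overline{w}$ singularity is then removed because ${\bf P}_0$ of the piece on $\{|w|\le 1/2\}$ is holomorphic across $\overline{\D}$ and vanishes at $0$, so ${\bf P}_0$ reproduces it. Alternatively, cite surjectivity of ${\bf P}:L^\infty\to\calB$ plus the open mapping theorem, as the paper does. Second, (ii)$\Rightarrow$(i) tacitly needs at least one admissible resolution to exist, which Lemma~\ref{20260614lem20} supplies.
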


Of course it is \emph{not} immediate to connect \eqref{20260614eq20} with \eqref{20260614eq21}. We divide our proof of Theorem \ref{boundedness} into two steps.
\begin{enumerate} 
\item [$\diamond$] {\bf Step 1.} We shall show that the seminorm embedding \eqref{20260614eq20} is bounded if and only if
\begin{align}\label{20260614eq21X}
\calC_{\calR}(\mu)
&:=
\sup_{N\ge0}\;
\sup_{\{|c_R|\le1:\, R\in\calR_N\}}
\int_{\D}
\left|
        \sum_{R\in\calR_N} c_R\Phi_R(z)
\right|^2
\,d\mu(z) \nonumber \\
&=
\sup_{N\ge0}\;
\sup_{\{|c_R|\le1:\, R\in\calR_N\}}
\int_{\D}
\left|
        {\bf P}_0\left(\sum_{R\in\calR_N}c_R\one_R\right)(z)
\right|^2
\,d\mu(z)
<\infty 
\end{align}
(see Theorem \ref{boundednessPartI}). This capacity is motivated by the classical fact that the Bergman projection $P$ maps
$L^\infty(\D)$ boundedly onto $\calB$; see, for instance,
\cite[Theorem~3]{PelaezRattya2021}. We note that $\calC_{\calR}(\mu)$ is exactly
the dyadic quantity that encodes the seminorm embedding \eqref{20260614eq20} at
the dyadic level. Although, for a fixed $\calR$, the quantity
$\calC_{\calR}(\mu)$ depends only on $\mu$ and may be viewed as a dyadic capacity,
it still requires testing all dyadic simple functions of the form
$$
\sum_{R\in\calR_N} c_R\one_R,
\qquad |c_R|\le1.
$$
The \emph{key} observation is that, after expanding the square in \eqref{20260614eq21X},
the resulting expression is exactly the positive semidefinite sesquilinear form
associated with the Bergman packet matrix $\Gamma_{\mu,N}$ defined in
\eqref{20260614eq13}. This motivates the further simplification carried out
below.

\vspace{0.1cm}

\item [$\diamond$] {\bf Step 2.} Next, using finite-dimensional semidefinite programming, we shall prove that for
every admissible dyadic resolution $\calR=\{\calR_N\}_{N\ge0}$, one has
$$
        \frakB_{\calR}(\mu)\simeq \calC_{\calR}(\mu)
$$
(see Theorem~\ref{boundednessPartII}). This completes the proof of
Theorem~\ref{boundedness}.
\end{enumerate}

\begin{rem} \label{20260615rem01}
We use the following convention. If, for some $N\ge0$ and some
$R\in\calR_N$,
$$
        \int_\D |\Phi_R(z)|^2\,d\mu(z)=+\infty,
$$
or equivalently if a diagonal entry of $\Gamma_{\mu,N}$ is infinite, then we
set
$$
        \calC_{\calR}(\mu)=\frakB_{\calR}(\mu)=+\infty.
$$
Otherwise, all diagonal entries of $\Gamma_{\mu,N}$ are finite, and hence all
entries of $\Gamma_{\mu,N}$ are finite by the Cauchy--Schwarz inequality.
In this case $\Gamma_{\mu,N}$ is a finite positive semidefinite Hermitian
matrix.
\end{rem}

\begin{rem}\label{rem:log_envelope}
We compare $\calC_{\calR}(\mu)$ with the classical logarithmic moment
condition
\begin{equation} \label{20260615eq01}
        \int_{\D}\left(\log\frac{e}{1-|z|}\right)^2\,d\mu(z)<+\infty
\end{equation}
introduced in \cite{GirelaPelaezPerezGonzalezRattya2008} with $p=2$ there; see \eqref{20260615eq10}. Indeed, for each $N\ge0$ and each choice of coefficients $|c_R|\le1$, $R\in\calR_N$,
one has
$$
\begin{aligned}
        \left|\sum_{R\in\calR_N}c_R\Phi_R(z)\right|
        &\le \sum_{R\in\calR_N}|\Phi_R(z)| \le \int_{\D}|K_0(z,w)|\,dA(w) \\
        & \le \int_{\D} \frac{dA(w)}{|1-z\overline{w}|^2}+1
        \lesssim \log\frac{e}{1-|z|}.
\end{aligned}
$$
Hence \eqref{20260615eq01} implies $\calC_{\calR}(\mu)<+\infty$.

\vspace{0.1cm}

We emphasize that the above comparison loses information at the point where
absolute values are taken before summing the Bergman packets, which ignores the possible cancellation among the packets $\Phi_R$. This is closely
related to the loss in the standard pointwise Bloch growth estimate
$$
        |f(z)-f(0)|
        \lesssim \|f\|_{\calB,*}\log\frac{e}{1-|z|}.
$$
While such an estimate is useful for obtaining a simple sufficient condition,
it only records the possible size growth of a Bloch function and does not keep
track of the finer cancellation coming from the analytic structure. Therefore
the logarithmic moment condition should be regarded as a convenient 
sufficient condition, whereas $\calC_{\calR}(\mu)$ retains the packet-level
information needed in Theorem~\ref{boundedness}.

\end{rem}

Once the boundedness is settled, the compactness part follows from a standard modification of the boundedness argument. More precisely,  for $0<\rho<1$, write $S_\rho:=\{z\in\D: |z|>\rho\}$. We have the following. 

\begin{thm}[Compactness]\label{compactness}
Let $\mu$ be a finite positive Borel measure on $\D$. Then the following statements are equivalent.

\begin{enumerate}
\item[(i)] The seminorm embedding
$$
id: \mathring{\calB} \to L^2(\mu)
$$
is compact.

\item[(ii)] For every admissible dyadic resolution
$\calR=\{\calR_N\}_{N \ge 0}$ of $\D$, one has $\frakB_{\calR}(\mu)<+\infty$ and 
$$
        \lim_{\rho \to 1^{-}} \frakB_{\calR} \left(\one_{S_\rho}\mu\right)=0.
$$

\item[(iii)] For every admissible dyadic resolution
$\calR=\{\calR_N\}_{N \ge 0}$ of $\D$, one has $\calC_{\calR}(\mu)<+\infty$ and 
$$
        \lim_{\rho \to 1^{-}} \calC_{\calR} \left(\one_{S_\rho}\mu\right)=0.
$$
\end{enumerate}

Consequently,
\begin{align*}
& id: \calB\longrightarrow L^2(\mu) \ \textrm{is compact} \\
& \Longleftrightarrow \quad \mu(\D)+\frakB_{\calR}(\mu)<\infty \quad \textrm{and} \quad \lim_{\rho \to 1^{-}} \frakB_{\calR} \left(\one_{S_\rho}\mu\right)=0 \\
& \Longleftrightarrow \quad \mu(\D)+\calC_{\calR}(\mu)<\infty \quad \textrm{and} \quad \lim_{\rho \to 1^{-}} \calC_{\calR} \left(\one_{S_\rho}\mu\right)=0
\end{align*}
for every admissible dyadic resolution $\calR$ of $\D$.
\end{thm}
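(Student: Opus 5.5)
The plan is to reduce everything to the boundedness machinery of Theorem~\ref{boundedness}, applied to the truncated measures $\one_{S_\rho}\mu$. The equivalence (ii)$\Leftrightarrow$(iii) should follow at once from the comparability $\frakB_{\calR}(\nu)\simeq\calC_{\calR}(\nu)$ (Step 2, Theorem~\ref{boundednessPartII}), applied to $\nu=\mu$ and $\nu=\one_{S_\rho}\mu$. The constants there come from finite-dimensional semidefinite programming and are independent of the measure. It therefore suffices to prove (i)$\Leftrightarrow$(iii), or equivalently to show that (i) holds if and only if $C_{\mathring{\calB}}(\mu)<\infty$ and $C_{\mathring{\calB}}(\one_{S_\rho}\mu)\to0$ as $\rho\to1^-$. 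Here, by Step 1 and Theorem~\ref{boundedness}, $C_{\mathring{\calB}}(\nu)\simeq\calC_{\calR}(\nu)$ with constants uniform in $\nu$. The final ``consequently'' statement then follows because $\mu(\D)<\infty$ handles the constants: a rank-one perturbation does not affect compactness.

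For (iii)$\Rightarrow$(i), take a bounded sequence $\{f_n\}\subset\mathring{\calB}$ with $\|f_n\|_{\calB,*}\le1$. By normal families, pass to a subsequence converging locally uniformly to some $f\in\mathring{\calB}$ with $\|f\|_{\calB,*}\le 1$. Split
\[
\int_\D|f_n-f|^2d\mu=\int_{|z|\le\rho}|f_n-f|^2d\mu+\int_{S_\rho}|f_n-f|^2d\mu .
\]
The first term tends to $0$ by uniform convergence on $\overline{D(0,\rho)}$ together with $\mu(\D)<\infty$. The second is at most $C_{\mathring{\calB}}(\one_{S_\rho}\mu)\|f_n-f\|^2_{\calB,*}\le 4C_{\mathring{\calB}}(\one_{S_\rho}\mu)$, which is small uniformly in $n$ once $\rho$ is close to $1$.

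For (i)$\Rightarrow$(iii), I would argue by contradiction. Boundedness is immediate from compactness and Theorem~\ref{boundedness}. If $C_{\mathring{\calB}}(\one_{S_{\rho}}\mu)\not\to0$, pick $\rho_k\uparrow1$ and $f_k$ with $\|f_k\|_{\calB,*}\le1$ and $\int_{S_{\rho_k}}|f_k|^2d\mu\ge\varepsilon$. The main obstacle is that $f_k$ need not tend to $0$ locally uniformly, so compactness cannot be applied directly; moreover, Bloch functions cannot simply be truncated. I would first exploit the dyadic representation: by Step 1 one may take $f_k={\bf P}_0\psi_k$ with $\psi_k=\sum c_R\one_R$ and $|c_R|\le1$. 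Then, given any fixed $r<1$, I would replace $\psi_k$ by $\psi_k\one_{\{|w|>r\}}$ (approximately, modulo tiles straddling the circle; refine $N$ using property (c) of Definition~\ref{dyadicresolution}). The removed piece ${\bf P}_0(\psi_k\one_{\{|w|\le r\}})$ is bounded on $\D$ by $C_r$, since $|K_0(z,w)|\le C_r$ for $|w|\le r$. Its contribution on $S_{\rho_k}$ is at most $C_r^2\mu(S_{\rho_k})\to0$.

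On the other hand, ${\bf P}_0(\psi\one_{\{|w|>r\}})$ is small on compact sets uniformly in $|\psi|\le1$ as $r\to1$, because $\int_{|w|>r}|K_0(z,w)|\,dA(w)\to0$ uniformly for $|z|\le\rho_0$. So a diagonal choice $r_k\to1$ yields $g_k$ with $\|g_k\|_{\calB,*}\lesssim1$, $g_k\to0$ locally uniformly, and $\int|g_k|^2d\mu\ge\varepsilon/2$ for large $k$. Compactness then gives a subsequence converging in $L^2(\mu)$. The limit must be $0$, since the pointwise limit is $0$. This contradicts the lower bound and completes the proof.
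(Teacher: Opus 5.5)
Your proof is correct. The treatment of (ii)$\Leftrightarrow$(iii), of (iii)$\Rightarrow$(i), and of the final consequence is essentially the paper's; the genuine difference is in (i)$\Rightarrow$(iii).

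The obstacle you identify there, that $f_k$ need not tend to $0$ locally uniformly, is removed in the paper simply by subtracting the limit. After extracting $f_n\to f$ locally uniformly, the paper applies the sequential compactness criterion to the bounded, locally null sequence $f_n-f$. This gives $\|f_n-f\|_{L^2(\mu)}\to0$, and then
$$\Bigl(\int_{S_{\rho_n}}|f_n|^2\,d\mu\Bigr)^{1/2}\le\|f_n-f\|_{L^2(\mu)}+\Bigl(\int_{S_{\rho_n}}|f|^2\,d\mu\Bigr)^{1/2}\longrightarrow0,$$
where the last term vanishes by dominated convergence because $f\in L^2(\mu)$ is a single fixed function. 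No truncation is needed.

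Your route instead splits the source as $\psi_k=\psi_k\one_{\{|w|\le r\}}+\psi_k\one_{\{|w|>r\}}$. The inner part gives a function bounded by $C_r$ on all of $\D$, so it contributes at most $C_r^2\mu(S_{\rho_k})$. The outer part has Bloch seminorm $\lesssim1$ by Lemma~\ref{20260614lem01} and is uniformly small on compacta. A diagonal choice $r_k\to1$ then produces an explicit locally null sequence with non-vanishing $L^2(\mu)$ mass. The estimates you need do hold: $|K_0(z,w)|\le(1-r)^{-2}+1$ for $|w|\le r$, and $\int_{|w|>r}|K_0(z,w)|\,dA(w)\le\bigl((1-\rho_0)^{-2}+1\bigr)(1-r^2)$ for $|z|\le\rho_0$.

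What your route buys is a concrete witness against compactness. The cost is that it depends on the integral representation and on kernel bounds. The paper's argument is soft, using only Montel's theorem, the compactness criterion, and dominated convergence, which is why it carries over verbatim to the $\calQ_p$ case.

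Your parenthetical about keeping the truncated source dyadic (tiles straddling the circle, refining $N$ via property (c)) is unnecessary. Lemma~\ref{20260614lem01} applies to any bounded $\psi$, so you can use $\psi_k\one_{\{|w|>r\}}$ as it stands, and you may as well take $\psi_k$ from Lemma~\ref{20260614lem02}.
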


\vspace{0.1cm}

Finally, we point out that the dyadic framework developed above can also be
adapted to the $\mathcal Q_p$--Carleson measure problem. For $0<p\le1$, this
problem asks for necessary and sufficient conditions on a positive Borel
measure $\mu$ under which the embedding
$$
        \operatorname{id}:\mathcal Q_p\longrightarrow L^2(\mu)
$$
is bounded or compact, where the spaces $\mathcal Q_p$ are defined in
\eqref{20260615DefnQp}. The details are given in Section~\ref{Sec06}, where we
give a careful comparison between the Bloch and $\mathcal Q_p$ settings and
explain how the dyadic resolution, the Bergman packet matrix, and the associated
capacity are modified in the passage from the Bloch case to the
$\mathcal Q_p$ case.

\medskip 

We summarize the main novelties of the present paper:
\begin{enumerate}
        \item We give, to the best of our knowledge, the first complete
        necessary and sufficient characterization of both the Bloch and
        $\calQ_p$--Carleson measure problems using ideas from dyadic harmonic
        analysis. The resulting criteria are expressed entirely in terms of the
        measure $\mu$, through the Bloch capacity \eqref{20260614eq14} and the
        $\calQ_p$--capacity \eqref{eq:QpMatrixCapacity}, respectively.

        \item This paper further develops the dyadic program initiated in our
        recent work \cite{HuZhou2026}, but in a different embedding setting.
        In \cite{HuZhou2026}, the main object is a derivative trace problem,
        which is local in nature and has the form $f'\mapsto f'$. By contrast,
        the present Carleson measure problems require one to control embeddings
        of the form $f'\mapsto f$. This distinction leads to the new dyadic
        framework developed in the present paper.
\end{enumerate}

\vspace{0.1cm}

The rest of the paper is organized as follows. In Section~\ref{Sec2}, we
collect several preliminary results needed for the proof of the main theorems,
including the mapping properties of the reduced Bergman projection ${\bf P}_0$
and an explicit construction of admissible dyadic resolutions. In
Section~\ref{Sec3}, we prove the first part of the boundedness characterization
for the Bloch--Carleson measure problem, reducing the embedding to a dyadic
capacity condition. In Section~\ref{Sec4}, we identify this dyadic capacity
condition with the Bloch capacity condition and thereby complete the proof of
Theorem~\ref{boundedness}. In Section~\ref{Sec5}, we prove the compactness
characterization, Theorem~\ref{compactness}, by establishing the corresponding
vanishing capacity condition near the boundary of the disc. Finally, in
Section~\ref{Sec06}, we adapt the dyadic framework to the
$\calQ_p$--Carleson measure problem and prove the corresponding boundedness and
compactness characterizations.

Throughout this paper, for $a ,b \in  \mathbb{R}$, $a\lesssim b$ means there exists a positive number $C$, which is independent of $a$ and $b$, such that $a\leq C\,b$. Moreover, if both $a \lesssim  b$ and $b\lesssim a$ hold, we say $a \simeq b$.
\\
\noindent{\bf Acknowledgement.}
The authors would like to thank Ruhan Zhao for bringing these problems to their
attention and for helpful discussions. The authors also thank Jos\'e Pel\'aez
and Jouni R\"atty\"a for their valuable comments and suggestions. The first
author was supported by the Simons Travel Grant MPS-TSM-00007213.

\medskip

\section{Preliminaries} \label{Sec2}

In this section, we collect several basic facts that will be used in the proof of our main results.

\subsection{Mapping properties of ${\bf P}_0$}

We first record some basic mapping properties of the reduced Bergman projection
${\bf P}_0$ acting from $L^\infty(\D)$ to $\mathring{\calB}$.
These facts are direct consequences of the classical theorem that the Bergman
projection maps $L^\infty(\D)$ boundedly onto the Bloch space. We refer the
reader to the excellent work \cite{PelaezRattya2021} for a generalization of this result to radially weighted Bergman projections.

\begin{lem}\label{20260614lem01} 
The reduced Bergman projection ${\bf P}_0$ maps $L^\infty(\D)$ boundedly into
$\mathring{\calB}$. More precisely, for every $\psi\in L^\infty(\D)$,
$$
        \|{\bf P}_0 \psi\|_{\calB,*}\lesssim \|\psi\|_{L^\infty(\D)}.
$$
\end{lem}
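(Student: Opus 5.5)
We will prove the estimate by differentiating under the integral sign and reducing it to the classical bound for the integral of $|1-z\overline w|^{-3}$ over the disc.

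\textbf{Holomorphy and normalization.} Fix $\psi\in L^\infty(\D)$ and $z$ in a compact subset $|z|\le r<1$. Since $|1-z\overline w|\ge 1-r$, the kernel $K_0(z,w)$ and its $z$-derivative are bounded uniformly in $w\in\D$ and $|z|\le r$. Hence differentiation under the integral sign in \eqref{eq:P0} is justified by dominated convergence. This shows that ${\bf P}_0\psi$ is holomorphic on $\D$. Moreover $K_0(0,w)=0$ gives ${\bf P}_0\psi(0)=0$. The formula for the derivative is
$$
({\bf P}_0\psi)'(z)=\int_\D \psi(w)\,\frac{2\overline w}{(1-z\overline w)^3}\,dA(w).
$$
The constant $K(0,w)=1$ contributes nothing to the derivative, so $({\bf P}_0\psi)'=({\bf P}\psi)'$.

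\textbf{Pointwise derivative bound.} Estimating the integrand trivially, we get
$$
(1-|z|^2)\,|({\bf P}_0\psi)'(z)|\le 2\|\psi\|_{L^\infty}\,(1-|z|^2)\int_\D\frac{dA(w)}{|1-z\overline w|^3}.
$$
It then suffices to invoke the classical Forelli--Rudin estimate
$$
\int_\D |1-z\overline w|^{-2-c}\,dA(w)\simeq (1-|z|^2)^{-c},\qquad c>0,
$$
here with $c=1$. This yields $\|{\bf P}_0\psi\|_{\calB,*}\lesssim\|\psi\|_{L^\infty}$, and ${\bf P}_0\psi\in\mathring{\calB}$ follows.

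\textbf{Proof of the Forelli--Rudin estimate.} The only step needing any real work is this integral estimate; it is standard and could simply be cited. To prove it directly, expand
$$
(1-z\overline w)^{-3/2}=\sum_n a_n (z\overline w)^n,\qquad a_n\simeq n^{1/2}.
$$
Integrate in polar coordinates using orthogonality of the monomials, $\int_\D |w|^{2n}\,dA=1/(n+1)$. This gives
$$
\int_\D\frac{dA(w)}{|1-z\overline w|^3}=\sum_n \frac{a_n^2|z|^{2n}}{n+1}\simeq\sum_n |z|^{2n}\simeq (1-|z|^2)^{-1}.
$$
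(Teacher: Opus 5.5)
Your proof is correct and follows the paper's argument. Like the paper, you differentiate under the integral sign, bound $|({\bf P}_0\psi)'(z)|$ by $2\|\psi\|_{L^\infty(\D)}\int_\D|1-z\overline w|^{-3}\,dA(w)$, and then use $\int_\D|1-z\overline w|^{-3}\,dA(w)\lesssim(1-|z|^2)^{-1}$.

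The paper simply asserts this last estimate. Your power-series verification of it, and your dominated-convergence justification for differentiating under the integral, fill in details the paper leaves implicit. One small correction: you should write $a_n\simeq(n+1)^{1/2}$ rather than $n^{1/2}$, so that the $n=0$ term is covered.
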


\begin{proof}
Since ${\bf P}_0 \psi(0)=0$, it suffices to estimate the Bloch seminorm of ${\bf P}_0 \psi$.
Since
$$
        ({\bf P}_0 \psi)'(z)
        =
        2\int_{\D}
        \frac{\psi(w)\overline w}{(1-z\overline w)^3}\,dA(w), 
$$
we have
$$
        |({\bf P}_0 \psi)'(z)|
        \lesssim
        \|\psi\|_{L^\infty(\D)}
        \int_{\D}\frac{dA(w)}{|1-z\overline w|^3} \lesssim  \frac{\|\psi\|_{L^\infty(\D)}}{1-|z|^2}, \qquad z \in \D, 
$$
which gives the desired claim. 
\end{proof}

On the other hand, we have the following representation theorem for $f \in \mathring{\calB}$. 

\begin{lem}\label{20260614lem02}
For every $f\in\mathring{\calB}$, there exists $\psi\in L^\infty(\D)$ such that
$$
        f={\bf P}_0 \psi,
        \qquad
        \|\psi\|_{L^\infty(\D)}\lesssim \|f\|_{\calB,*}.
$$
\end{lem}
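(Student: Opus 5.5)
We will write $\psi$ explicitly in terms of $f'$ by a reproducing formula with a higher weight, so that $f$ is the image of a bounded function under ${\bf P}_0$. Let $f\in\mathring{\calB}$. The classical starting point is the weighted reproducing formula for derivatives: for $g\in H(\D)$ with enough growth control (which we first arrange by dilation),
$$
g(z)=\int_{\D} g(w)\,\frac{2(1-|w|^2)}{(1-z\overline w)^{3}}\,dA(w).
$$
Our plan is to take
$$
\psi(w):=(1-|w|^2)\,\overline{w}^{\,-1}\!\cdot\!\big(\text{correction}\big),
$$
but it is cleaner to argue as follows. Define
$$
\psi(w):=(1-|w|^2)f'(w)\,\frac{1}{\overline{w}}\cdot \overline w \;=\;(1-|w|^2)\,\frac{f'(w)}{\,\cdot\,}\ ,
$$
i.e. we look for $\psi$ of the form $\psi(w)=c\,(1-|w|^2)\,h(w)$ with $h$ holomorphic, where $h$ is chosen so that
$({\bf P}_0\psi)'=f'$.

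The key computation is as follows. For $\psi=(1-|w|^2)h$, we differentiate under the integral, using the formula for $({\bf P}_0\psi)'$ from the proof of Lemma \ref{20260614lem01}:
$$
({\bf P}_0\psi)'(z)=2\int_\D \frac{(1-|w|^2)\,h(w)\overline w}{(1-z\overline w)^3}\,dA(w).
$$
We then expand $(1-z\overline w)^{-3}=\sum_n \frac{(n+1)(n+2)}{2}z^n\overline w^n$ and $h(w)=\sum_k a_k w^k$. Orthogonality of monomials under the radial weight $(1-|w|^2)dA$ picks out $k=n+1$, with $\int_\D |w|^{2n+2}(1-|w|^2)\,dA=\frac{1}{(n+2)(n+3)}$. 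This gives
$$
({\bf P}_0\psi)'(z)=\sum_{n\ge0}\frac{n+1}{n+3}\,a_{n+1}z^n .
$$
Since $f'(z)=\sum_{n\ge 0}(n+1)b_{n+1}z^n$, where $f=\sum b_k z^k$, we should choose $a_{n+1}=(n+3)b_{n+1}$, i.e. $h(w)=w f'(w)+2f(w)$ (note that $a_0$ is irrelevant and $f(0)=0$). So we set
$$
\psi(w):=(1-|w|^2)\big(wf'(w)+2f(w)\big)
$$
and then $({\bf P}_0\psi)'=f'$ and ${\bf P}_0\psi(0)=0=f(0)$, hence ${\bf P}_0\psi=f$.

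The main obstacle is that this $\psi$ is \emph{not} bounded: the term $(1-|w|^2)|wf'(w)|\le\|f\|_{\calB,*}$ is fine, but $(1-|w|^2)|f(w)|$ is only $\lesssim \|f\|_{\calB,*}(1-|w|)\log\frac{e}{1-|w|}$, which is in fact bounded. Indeed, $t\log(e/t)\le 1$ on $(0,1]$, so the potential obstacle disappears and $\|\psi\|_\infty\lesssim\|f\|_{\calB,*}$ follows from the standard growth estimate $|f(w)|\lesssim\|f\|_{\calB,*}\log\frac{e}{1-|w|}$ (integrate $f'$ along the radius).

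The remaining genuine technicality is justifying the interchange of sums and integrals, since $f$ need not extend beyond $\D$. We will first carry out the computation for the dilates $f_r(z)=f(rz)$, $r<1$, which are holomorphic on a neighborhood of $\overline\D$, so that the power-series manipulations are absolutely convergent. This gives $f_r={\bf P}_0\psi_r$ with $\psi_r=(1-|w|^2)(wf_r'+2f_r)$ and $\|\psi_r\|_\infty\lesssim\|f\|_{\calB,*}$ uniformly in $r$. Then $\psi_r\to\psi$ pointwise, boundedly, so by dominated convergence (with $|K_0(z,\cdot)|$ bounded for fixed $z$) we get ${\bf P}_0\psi_r(z)\to{\bf P}_0\psi(z)$. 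Since also $f_r(z)\to f(z)$, this yields $f={\bf P}_0\psi$.
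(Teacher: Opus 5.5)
Your proof is correct, and it takes a genuinely different route from the paper. The paper argues abstractly. It quotes the theorem of Pel\'aez--R\"atty\"a that ${\bf P}$ maps $L^\infty(\D)$ boundedly onto $\calB$, and uses the open mapping theorem to pick a preimage $\psi$ with $\|\psi\|_{L^\infty(\D)}\lesssim\|f\|_{\calB}=\|f\|_{\calB,*}$. It then notes ${\bf P}\psi={\bf P}_0\psi+{\bf P}\psi(0)$ with ${\bf P}\psi(0)=f(0)=0$.

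You instead write the preimage down explicitly, $\psi=(1-|w|^2)\bigl(wf'(w)+2f(w)\bigr)$. Your checks all hold:
\begin{itemize}
\item The coefficient computation is right: ${\bf P}_0\psi(z)=\sum_{n\ge1}\frac{a_n}{n+2}z^n$, equivalently your derivative formula with $\frac{n+1}{n+3}a_{n+1}$.
\item The bound $(1-|w|^2)|f(w)|\lesssim\|f\|_{\calB,*}$ follows from the logarithmic growth estimate exactly as you say.
\item The dilation plus dominated convergence step legitimately handles the interchange of sums and integrals. It is even avoidable, since $\psi$ is bounded and the expansion of $K_0(z,\cdot)$ converges uniformly on $\D$ for fixed $z$.
\end{itemize}

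Your route gives a self-contained argument with explicit constants. More importantly, it gives a bounded \emph{linear} right inverse $f\mapsto\psi$ of ${\bf P}_0$ on $\mathring{\calB}$, whereas the open mapping theorem only yields some norm-controlled preimage. It also mirrors the explicit lift $\psi=f'$ that the paper uses for ${\bf B}_{p,0}$ in the $\calQ_p$ section. The paper's route is shorter and rests on a result that holds for general radial weights, but it is non-constructive.

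Do clean up the write-up. The lines defining $\psi$ via $\overline{w}^{\,-1}\cdot(\text{correction})$ and the sentence claiming $\psi$ is ``not bounded'' are abandoned false starts and should be deleted. The actual argument begins with the ansatz $\psi=(1-|w|^2)h$.
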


\begin{proof}
By \cite[Theorem 3]{PelaezRattya2021},
and the open mapping theorem, there exists $\psi\in L^\infty(\D)$ such that
$$
        f={\bf P}\psi,
        \qquad \textrm{and} \qquad 
        \|\psi\|_{L^\infty(\D)}\lesssim \|f\|_{\calB}.
$$
Since $f\in\mathring{\calB}$, one has $f(0)=0$, and hence $\|f\|_{\calB}=\|f\|_{\calB,*}$.
Moreover, for every $z\in\D$,
\begin{align*}
        f(z)
        &={\bf P}\psi(z) ={\bf P}_0 \psi(z)+{\bf P}\psi(0)\\
        &={\bf P}_0 \psi(z)+f(0) ={\bf P}_0 \psi(z).
\end{align*}
This completes the proof.
\end{proof}

\subsection{An example of admissible dyadic resolutions in $\D$} \label{Sec2.2}

We now give one simple construction of an admissible dyadic resolution of $\D$
in the sense of Definition~\ref{dyadicresolution}. The construction is obtained
by sending the standard dyadic resolution of $[0,1)^2$ to $\D$ through a polar
change of variables. Now we turn to some details. 

Let
\[
        \Theta:[0,1)\times[0,1)\longrightarrow \D,
        \qquad
        \Theta(s,t):=\sqrt{s}\,e^{2\pi i t}.
\]
This parametrization is area preserving in the following sense: for every $\psi\in L^1(dA)$,
\begin{equation}\label{eq:area_preserving_theta}
        \int_{\D}\psi(z)\,dA(z)
        =\int_0^1\int_0^1 \psi\bigl(\sqrt{s}\,e^{2\pi i t}\bigr)\,ds\,dt.
\end{equation}
Indeed, this is just the change of variables $s=r^2$ and $t=\theta/(2\pi)$ for the normalized area measure. 

For $N\ge 1$ and $0\le j,k\le 2^N-1$, let
\[
        S_{N,j,k}
        :=\left[\frac{j}{2^N},\frac{j+1}{2^N}\right)
          \times
          \left[\frac{k}{2^N},\frac{k+1}{2^N}\right)
\]
be the usual dyadic squares in $[0,1)^2$, and define the polar tile
\begin{equation}\label{eq:polar_tiles}
        R_{N,j,k}:=\Theta(S_{N,j,k}).
\end{equation}
Equivalently,
\[
        R_{N,j,k}
        =\left\{re^{i\theta}:\frac{j}{2^N}\le r^2<\frac{j+1}{2^N},\;
        \frac{2\pi k}{2^N}\le \theta<\frac{2\pi(k+1)}{2^N}\right\}.
\]
As a consequence of \eqref{eq:area_preserving_theta}, 
$$
        A(R_{N,j,k})=2^{-2N}.
$$
Finally, set $\calR_0:=\{\D\}$, and 
\[
        \calR_N:=\{R_{N,j,k}:0\le j,k\le 2^N-1\}, \quad N \ge 1
\]
(see, Figure \ref{Fig1} below for an example) and $\calR:=\{\calR_N\}_{N \ge 0}$. 

\begin{figure}[ht]
\centering
\begin{tikzpicture}[scale=3]

\def\N{5}
\def\M{32} 

\def\jj{20}
\def\kk{7}

\pgfmathsetmacro{\rin}{sqrt(\jj/\M)}
\pgfmathsetmacro{\rout}{sqrt((\jj+1)/\M)}
\pgfmathsetmacro{\angA}{360*\kk/\M}
\pgfmathsetmacro{\angB}{360*(\kk+1)/\M}

\foreach \j in {1,...,31} {
    \pgfmathsetmacro{\rad}{sqrt(\j/\M)}
    \draw[gray!70, thin] (0,0) circle (\rad);
}

\draw[thick] (0,0) circle (1);
\node[right] at (1.03,0) {$\calR_5$};

\foreach \k in {0,...,31} {
    \pgfmathsetmacro{\ang}{360*\k/\M}
    \draw[gray!70, thin] (0,0) -- (\ang:1);
}

\fill (0,0) circle (0.3pt);

\end{tikzpicture}
\caption{The partition $\calR_5$ in an admissible dyadic resolution on $\D$.}
\label{Fig1}
\end{figure}

\begin{lem}\label{20260614lem20}
Let $\calR=\{\calR_N\}_{N \ge 0}$ be defined as above.  Then
$\calR$ is an admissible dyadic resolution in $\D$ in the sense of Definition~\ref{dyadicresolution}.
\end{lem}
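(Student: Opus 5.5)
We verify the four conditions (a)--(d) of Definition~\ref{dyadicresolution} for the polar tiles, working throughout on the parameter square $[0,1)^2$ via $\Theta$.

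\textbf{Step 1: conditions (a), (b) and (d).} First, $\Theta$ is a bijection from $[0,1)^2$ onto $\D$. For $0<|z|<1$ the pair $(s,t)$ is determined by $s=|z|^2$ and $t=\arg z/(2\pi)\in[0,1)$, and $\Theta^{-1}(0)=\{0\}\times[0,1)$. So $\Theta$ is injective off the null set $\{s=0\}$, whose image is the single point $0$. Since the squares $S_{N,j,k}$ partition $[0,1)^2$, their images $R_{N,j,k}$ partition $\D$. These images are measurable, being Borel images under a homeomorphism away from that null set. There are finitely many of them, $4^N$ in number, which gives (a).

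For (b), each $S_{N+1,j',k'}$ lies in $S_{N,\lfloor j'/2\rfloor,\lfloor k'/2\rfloor}$. Applying $\Theta$ shows that every tile of $\calR_{N+1}$ lies in a tile of $\calR_N$, and trivially every tile lies in $\D\in\calR_0$. Condition (d) is merely a convention: we fix, say, lexicographic order in $(j,k)$.

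\textbf{Step 2: condition (c), reduction to the square.} By \eqref{eq:area_preserving_theta}, composition with $\Theta$, $\psi\mapsto\psi\circ\Theta$, is an isometry from $L^1(\D,dA)$ onto $L^1([0,1)^2,ds\,dt)$. It carries tile averages to square averages, since $\int_{R_{N,j,k}}\psi\,dA=\int_{S_{N,j,k}}\psi\circ\Theta$ and $A(R_{N,j,k})=|S_{N,j,k}|$. Hence $(E_N\psi)\circ\Theta=\mathbb E_N(\psi\circ\Theta)$, where $\mathbb E_N$ is the standard dyadic conditional expectation on $[0,1)^2$. It therefore suffices to show that $\mathbb E_N g\to g$ in $L^1([0,1)^2)$ for every $g\in L^1$.

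\textbf{Step 3: convergence of the dyadic expectations.} This is the only step with real content, and I would prove it by a density argument. Each $\mathbb E_N$ is a contraction on $L^1$, because $\|\mathbb E_N g\|_1\le\|g\|_1$ by the triangle inequality on each square. Take $g\in C([0,1]^2)$. By uniform continuity, $|\mathbb E_Ng-g|\le\omega_g(\sqrt2\,2^{-N})$ pointwise, where $\omega_g$ is the modulus of continuity of $g$; this tends to $0$. For general $g$ and $\varepsilon>0$, pick a continuous $h$ with $\|g-h\|_1<\varepsilon$. Then
$$
\|\mathbb E_Ng-g\|_1\le \|\mathbb E_N(g-h)\|_1+\|\mathbb E_Nh-h\|_1+\|h-g\|_1\le 2\varepsilon+\|\mathbb E_Nh-h\|_\infty,
$$
and the right-hand side is below $3\varepsilon$ for large $N$. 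This gives (c), with the level $N=0$ playing no role in the limit.

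As an alternative to the density argument, one could cite the martingale convergence theorem for the filtration generated by dyadic squares, which generates the Borel $\sigma$-algebra. The density argument is more self-contained.
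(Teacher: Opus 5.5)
Your proof is correct and follows the paper's architecture. You pull back through $\Theta$ and use that $\Theta$ transports $dA$ to $ds\,dt$. Hence $\psi\mapsto\psi\circ\Theta$ is an $L^1$ isometry intertwining $E_N$ with the standard dyadic conditional expectation on $[0,1)^2$, and condition (c) reduces to convergence of dyadic averages there.

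The only difference is in that final step. The paper simply invokes the dyadic martingale convergence theorem. You prove the needed $L^1$ convergence by hand, combining the uniform contractivity $\|\mathbb E_N g\|_1\le\|g\|_1$ with uniform convergence $\mathbb E_N h\to h$ for continuous $h$. This uses nothing beyond the fact that the mesh of the partitions tends to $0$. Your version is elementary and self-contained. The martingale theorem gives more (for instance, almost everywhere convergence), but only $L^1$ convergence is required for (c).

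Two minor slips in Step 1 should be fixed.
\begin{enumerate}
\item $\Theta$ is not a bijection. In fact every tile $R_{N,0,k}$, $0\le k\le 2^N-1$, contains the origin, so the $R_{N,j,k}$ partition $\D$ only up to the null set $\{0\}$. This is exactly the paper's ``up to sets of area zero''; alternatively, assign the origin to a single tile.
\item $\Theta$ restricted to $(0,1)\times[0,1)$ is a continuous bijection onto $\D\setminus\{0\}$ but not a homeomorphism, since the inverse is discontinuous along the positive real axis. Measurability of the tiles is nevertheless immediate from their explicit polar description.
\end{enumerate}
Neither point affects (c), since only a set of measure zero is involved.
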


\begin{proof}
Conditions \textnormal{(a)} and \textnormal{(b)} follow directly from the
corresponding facts for the dyadic squares $S_{N,j,k}$ in $[0,1)^2$, up to
sets of area zero arising from the boundary identifications in the polar
parametrization.

It remains to verify condition \textnormal{(c)}. Let $\psi\in L^1(\D)$ and set
$$
        \widetilde\psi(s,t):=\psi(\Theta(s,t)),
        \qquad (s,t)\in[0,1)^2.
$$
By \eqref{eq:area_preserving_theta}, one has
$\widetilde\psi\in L^1([0,1)^2)$. Let $\widetilde E_N$ denote the conditional
expectation on $[0,1)^2$ with respect to the dyadic squares $S_{N,j,k}$. Since
$\Theta$ is measure preserving, we have
$$
(E_N\psi)\circ\Theta=\widetilde E_N\widetilde\psi \qquad\text{a.e. on }[0,1)^2.
$$
The dyadic martingale convergence theorem on $[0,1)^2$ gives
$$
        \widetilde E_N\widetilde\psi\longrightarrow \widetilde\psi
        \qquad\text{in} \quad L^1([0,1)^2).
$$
Using \eqref{eq:area_preserving_theta} again, this is equivalent to
$$
        E_N\psi\longrightarrow\psi
        \qquad\text{in} \quad L^1(\D).
$$
Thus condition \textnormal{(c)} holds. Finally, condition \textnormal{(d)} is
satisfied after fixing an arbitrary ordering of each finite set $\calR_N$.
\end{proof}

\medskip

\section{Boundedness for the Bloch--Carleson measure problem, Part I} \label{Sec3}

Now we turn to prove Theorem \ref{boundedness}. The goal of this section is to prove the following result. 

\begin{thm}\label{boundednessPartI}
Let $\mu$ be a finite positive Borel measure on $\D$. Then the following
are equivalent.

\begin{enumerate}
    \item[\textnormal{(i)}] The seminorm embedding
    \begin{equation}\label{20260614eq20BB}
        \int_{\D}|f(z)|^2\,d\mu(z)
        \le C\|f\|_{\calB,*}^2,
        \qquad f\in\mathring{\calB},
    \end{equation}
    is bounded.

    \item[\textnormal{(ii)}] For every admissible dyadic resolution
    $\calR=\{\calR_N\}_{N\ge0}$ in $\D$, one has
    $$
        \calC_{\calR}(\mu)<\infty.
    $$
\end{enumerate}
Moreover, if $C_{\mathring{\calB}}(\mu)$ denotes the best constant in
\eqref{20260614eq20BB}, then, for every admissible dyadic resolution $\calR$
in $\D$,
$$
        \calC_{\calR}(\mu)\simeq C_{\mathring{\calB}}(\mu).
$$
Consequently, for every admissible dyadic resolution $\calR$ in $\D$,
\begin{equation}\label{20260614eq99}
        id:\calB\longrightarrow L^2(\mu)
        \quad\textrm{is bounded}
        \quad\Longleftrightarrow\quad
        \mu(\D)+\calC_{\calR}(\mu)<\infty.
\end{equation}
\end{thm}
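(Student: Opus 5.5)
We prove the two comparisons $\calC_{\calR}(\mu)\lesssim C_{\mathring{\calB}}(\mu)$ and $C_{\mathring{\calB}}(\mu)\lesssim \calC_{\calR}(\mu)$ for an arbitrary fixed admissible dyadic resolution $\calR$. The equivalence of (i) and (ii), and then \eqref{20260614eq99} (adding $\mu(\D)<\infty$ to handle constants, as in the introduction), follow at once.

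\emph{The easy direction.} For each $N$ and each $|c_R|\le1$, put $\psi=\sum_{R\in\calR_N}c_R\one_R$. Then $\|\psi\|_{L^\infty}\le1$, and $f={\bf P}_0\psi$ lies in $\mathring{\calB}$ with $\|f\|_{\calB,*}\lesssim1$ by Lemma~\ref{20260614lem01}. Applying \eqref{20260614eq20BB} gives
$$\int_\D\Bigl|\sum_R c_R\Phi_R\Bigr|^2d\mu\lesssim C_{\mathring{\calB}}(\mu).$$
Taking suprema over the coefficients and over $N$ yields $\calC_{\calR}(\mu)\lesssim C_{\mathring{\calB}}(\mu)$.

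\emph{The converse.} Let $f\in\mathring{\calB}$. By Lemma~\ref{20260614lem02}, write $f={\bf P}_0\psi$ with $\|\psi\|_\infty\lesssim\|f\|_{\calB,*}$, and normalize so that $\|\psi\|_\infty\le1$. Set $\psi_N:=E_N\psi=\sum_{R\in\calR_N}c_R\one_R$, where $c_R$ is the average of $\psi$ over $R$, so $|c_R|\le1$. By linearity, ${\bf P}_0\psi_N=\sum_R c_R\Phi_R$, and therefore
$$\int|{\bf P}_0\psi_N|^2d\mu\le\calC_{\calR}(\mu)\qquad\text{for every }N.$$
It remains to pass to the limit $N\to\infty$. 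For fixed $z\in\D$, the kernel $K_0(z,\cdot)$ is bounded on $\D$, with bound depending on $|z|$. Condition (c) of Definition~\ref{dyadicresolution} gives $\psi_N\to\psi$ in $L^1(\D)$. Hence
$$|{\bf P}_0\psi_N(z)-{\bf P}_0\psi(z)|\le\sup_w|K_0(z,w)|\,\|\psi_N-\psi\|_{L^1}\to0,$$
so ${\bf P}_0\psi_N\to f$ pointwise (indeed locally uniformly). Fatou's lemma then gives
$$\int|f|^2d\mu\le\liminf_N\int|{\bf P}_0\psi_N|^2d\mu\le\calC_{\calR}(\mu).$$
Undoing the normalization gives $C_{\mathring{\calB}}(\mu)\lesssim\calC_{\calR}(\mu)$, where the implicit constant comes only from Lemma~\ref{20260614lem02}.

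\emph{The main obstacle.} The delicate step is the limiting argument in the converse. We need (c) only in its $L^1$ form, because pointwise convergence of $\psi_N$ is not needed; the uniform $L^\infty$ bound $|c_R|\le1$ is what keeps every approximant admissible in the supremum defining $\calC_{\calR}(\mu)$. Fatou's lemma avoids any need for a dominating function in $L^2(\mu)$. We must also check the convention of Remark~\ref{20260615rem01}: if some $\int|\Phi_R|^2d\mu=\infty$, then $\calC_{\calR}(\mu)=\infty$, and the easy direction shows this cannot happen when (i) holds. So both sides are finite or infinite simultaneously.
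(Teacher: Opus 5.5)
Your proposal is correct and follows essentially the same route as the paper. Necessity applies Lemma~\ref{20260614lem01} to ${\bf P}_0$ of dyadic step functions with $|c_R|\le1$. Sufficiency uses the representation of Lemma~\ref{20260614lem02}, the conditional expectations $E_N\psi$ with $|c_R|\le 1$, pointwise convergence of ${\bf P}_0(E_N\psi)$ obtained from $L^1$ convergence and the boundedness of $K_0(z,\cdot)$, and Fatou's lemma, exactly as in the paper.
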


\subsection{Sufficiency}

Let $\calR=\{\calR_N\}_{N\ge0}$ be an admissible dyadic resolution in $\D$
such that $\calC_{\calR}(\mu)<\infty$. We first prove that, for every $\psi\in L^\infty(\D)$,
\begin{equation}\label{20260614eqP0bound}
        \int_{\D}|{\bf P}_0 \psi(z)|^2\,d\mu(z)
        \le
        \calC_{\calR}(\mu)\|\psi\|_{L^\infty(\D)}^2.
\end{equation}
By homogeneity, assume that $\|\psi\|_{L^\infty(\D)}\le1$. Let $E_N\psi$ be the conditional expectation of $\psi$ with respect to
$\calR_N$:
$$
        E_N\psi
        =
        \sum_{R\in\calR_N}c_R^{(N)}\one_R,
        \qquad
        c_R^{(N)}
        :=
        \frac{1}{A(R)}\int_R\psi\,dA.
$$
Then $|c_R^{(N)}|\le1$ for every $R\in\calR_N$. By
Definition~\ref{dyadicresolution},
$$
        E_N\psi\longrightarrow \psi
        \qquad\text{in }L^1(\D).
$$
For each fixed $z\in\D$, the function $w\mapsto K_0(z,w)$ is bounded on
$\D$. Hence, we have
$$
        {\bf P}_0(E_N\psi)(z)\longrightarrow {\bf P}_0 \psi(z),
        \qquad z\in\D.
$$
On the other hand, by the definition of $\calC_{\calR}(\mu)$,
$$
\begin{aligned}
        \int_{\D}|{\bf P}_0(E_N\psi)(z)|^2\,d\mu(z)
        &=
        \int_{\D}
        \left|
                \sum_{R\in\calR_N}c_R^{(N)}\Phi_R(z)
        \right|^2
        \,d\mu(z)  \\
        &\le
        \calC_{\calR}(\mu).
\end{aligned}
$$
Finally, Fatou's lemma gives \eqref{20260614eqP0bound}.

\medskip

Now let $f\in\mathring{\calB}$. By Lemma~\ref{20260614lem02}, there exists
$\psi\in L^\infty(\D)$ such that
$$
        f={\bf P}_0 \psi,
        \qquad
        \|\psi\|_{L^\infty(\D)}
        \lesssim
        \|f\|_{\calB,*}.
$$
Applying \eqref{20260614eqP0bound}, we obtain
$$
        \int_{\D}|f(z)|^2\,d\mu(z)
        \le
        \calC_{\calR}(\mu)\|\psi\|_{L^\infty(\D)}^2
        \lesssim
        \calC_{\calR}(\mu)\|f\|_{\calB,*}^2.
$$
Therefore the seminorm embedding \eqref{20260614eq20BB} is bounded, and
$$
        C_{\mathring{\calB}}(\mu)\lesssim\calC_{\calR}(\mu).
$$

\subsection{Necessity}

Assume that the seminorm embedding \eqref{20260614eq20BB} is bounded. Let
$\calR=\{\calR_N\}_{N\ge0}$ be any admissible dyadic resolution in $\D$.
Fix $N\ge0$ and coefficients $|c_R|\le1$, $R\in\calR_N$. Put
$$
        \psi:=\sum_{R\in\calR_N}c_R\one_R.
$$
Since the tiles in $\calR_N$ are pairwise disjoint, one has
$$
        \|\psi\|_{L^\infty(\D)}\le1.
$$
Moreover,
$$
        {\bf P}_0 \psi(z)=\sum_{R\in\calR_N}c_R\Phi_R(z).
$$
By Lemma~\ref{20260614lem01}, ${\bf P}_0 \psi\in\mathring{\calB}$ and
$$
        \|{\bf P}_0 \psi\|_{\calB,*}\lesssim1.
$$
Therefore,
$$
\begin{aligned}
        \int_{\D}
        \left|
                \sum_{R\in\calR_N}c_R\Phi_R(z)
        \right|^2
        \,d\mu(z)
        &=
        \int_{\D}|{\bf P}_0 \psi(z)|^2\,d\mu(z)  \\
        &\lesssim
        C_{\mathring{\calB}}(\mu).
\end{aligned}
$$
Taking the supremum over $N$ and over all such coefficients gives
$$
        \calC_{\calR}(\mu)\lesssim C_{\mathring{\calB}}(\mu).
$$
Since $\calR$ was arbitrary, the necessity part is proved. Combining this with
the sufficiency part gives, for every admissible dyadic resolution $\calR$ in
$\D$,
$$
        \calC_{\calR}(\mu)\simeq C_{\mathring{\calB}}(\mu).
$$

\medskip

Finally, the assertion \eqref{20260614eq99} follows directly by splitting
$$
        f(z)=f(0)+\bigl(f(z)-f(0)\bigr)
$$
and using the equivalence proved above, together with the fact that constants
belong to $\calB$. This completes the proof of Theorem~\ref{boundednessPartI}.

\medskip 

\section{Boundedness for the Bloch--Carleson measure problem, Part II} \label{Sec4}

Theorem~\ref{boundednessPartI} shows that the seminorm embedding
\eqref{20260614eq20BB} is equivalent to the finiteness of
$\calC_{\calR}(\mu)$. In this section, we compare $\calC_{\calR}(\mu)$ with the Bloch capacity
$\frakB_{\calR}(\mu)$, which is defined through diagonal domination of the
finite matrices $\Gamma_{\mu,N}$. We first recall the two definitions.

For $R\in\calR_N$, recall that
$$
        \Phi_R(z)=\left({\bf P}_0 \one_R \right)(z)=\int_R \left(\frac{1}{(1-z\overline{w})^2}-1 \right)dA(w), \qquad z\in\D,
$$
and
$$
\Gamma_{\mu,N}(R_1, R_2)=\int_\D \Phi_{R_1}(z)\overline{\Phi_{R_2}(z)}\,d\mu(z),
        \qquad R_1, R_2\in\calR_N.
$$
The dyadic capacity $\calC_{\calR}(\mu)$ is given by
\begin{align}\label{20260614eqCcapacityRecall}
\calC_{\calR}(\mu)
&=
\sup_{N\ge0}
\sup_{\{|c_R|\le1:\,R\in\calR_N\}}
\int_\D
\left|
        \sum_{R\in\calR_N}c_R\Phi_R(z)
\right|^2
\,d\mu(z) \nonumber \\
&=
\sup_{N\ge0}
\sup_{\{|c_R|\le1:\,R\in\calR_N\}}
\sum_{R_1, R_2\in\calR_N}
\Gamma_{\mu,N}(R_1, R_2)c_{R_1}\overline{c_{R_2}}.
\end{align}
On the other hand, the Bloch capacity $\frakB_{\calR}(\mu)$ is defined as 
\begin{equation}\label{20260614eqBcapacityRecall}
\frakB_{\calR}(\mu)
=
\sup_{N\ge0}
\inf\left\{
        \sum_{R\in\calR_N}\lambda_R:
        \lambda_R\ge0,\quad
        \Gamma_{\mu,N}
        \le
        \diag\left(\{\lambda_R\}_{R\in\calR_N}\right)
\right\},
\end{equation}
where $\Gamma_{\mu,N}=\left\{\Gamma_{\mu,N}(R_1,R_2)\right\}_{R_1,R_2\in\calR_N}$ is the Bergman packet matrix associated with $\mu$ at level $N$. Our goal is to prove that $\calC_{\calR}(\mu)$ and
$\frakB_{\calR}(\mu)$ are equivalent up to an absolute constant.

\begin{thm}\label{boundednessPartII}
Let $\mu$ be a finite positive Borel measure on $\D$, and let $\calR=\{\calR_N\}_{N\ge0}$ be any admissible dyadic resolution in $\D$. Then $\calC_{\calR}(\mu) \simeq \frakB_{\calR}(\mu)$.

\end{thm}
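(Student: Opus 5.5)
We argue level by level: fix $N\ge0$ and compare, for the finite positive semidefinite Hermitian matrix $\Gamma:=\Gamma_{\mu,N}$ (finite by Remark~\ref{20260615rem01}; otherwise both quantities are $+\infty$), the two numbers
$$
c_N:=\sup_{\|c\|_\infty\le1} c^T\Gamma\bar c,\qquad
b_N:=\inf\Bigl\{\textstyle\sum_R\lambda_R:\lambda_R\ge0,\ \Gamma\le\diag(\lambda)\Bigr\}.
$$
Taking $\sup_N$ then gives $\calC_{\calR}(\mu)\simeq\frakB_{\calR}(\mu)$, provided $c_N\simeq b_N$ with absolute constants independent of $N$ and of the size $\#\calR_N$. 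So the whole proof is a finite-dimensional statement about PSD matrices, namely a Grothendieck-type duality for the ``$\infty\to1$'' quadratic form.

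\textbf{Easy direction $c_N\le b_N$.} If $\Gamma\le\diag(\lambda)$, then for any $c$ with $|c_R|\le1$ we have $c^T\Gamma\bar c\le\sum_R\lambda_R|c_R|^2\le\sum_R\lambda_R$. Note that $c^T\Gamma \bar c=\bar c^{\,*}\Gamma\bar c$, so the matrix inequality applies to the vector $\bar c$. Taking the infimum over admissible $\lambda$ and the supremum over $c$ gives $c_N\le b_N$.

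\textbf{Hard direction $b_N\lesssim c_N$.} This is the main obstacle. The plan has two steps.

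First, use SDP duality. Consider the semidefinite relaxation
$$
s_N:=\sup\{\operatorname{tr}(\Gamma X): X\ge0,\ X_{RR}\le1\ \text{for all }R\}.
$$
Its Lagrangian dual is exactly $b_N$: for any admissible $\lambda$ and feasible $X$,
$$
\operatorname{tr}(\Gamma X)\le\operatorname{tr}(\diag(\lambda)X)\le\sum_R\lambda_R .
$$
Strong duality holds because Slater's condition is satisfied: $X=\tfrac12 I$ is strictly feasible, and $\lambda_R=\|\Gamma\|+1$ is strictly feasible on the dual side. The infimum is attained by compactness. Hence $b_N=s_N$.

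Second, round from $s_N$ back to $c_N$. Factor $X=V^*V$, so that $X_{R_1R_2}=\langle v_{R_2},v_{R_1}\rangle$ with $\|v_R\|\le1$. Two rounding arguments are available.

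The first is the complex Grothendieck inequality restricted to PSD matrices (Nesterov/Rietz). Take a complex Gaussian vector $g$ and set $c_R=\operatorname{sgn}(\langle v_R,g\rangle)$, with $\|v_R\|$ normalized to $1$ after padding. The expected value of the resulting form is given by a positive-coefficient power series in the entries of $X$, whose leading term is $\tfrac{\pi}{4}X$. Because $\Gamma\ge0$ and Schur products of PSD matrices are PSD, every higher-order term contributes nonnegatively, so $\mathbb E\,c^T\Gamma\bar c\ge\tfrac{\pi}{4}\operatorname{tr}(\Gamma X)$.

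The second is a cruder alternative that avoids the Grothendieck machinery. Use $c_R=\langle v_R,g\rangle$ truncated to modulus $\le M$, and write
$$
\mathbb E|\langle\cdot\rangle|^2\text{-form}=\operatorname{tr}(\Gamma X).
$$
The truncation error can be controlled via $\Gamma\le\|\cdot\|$. This seems messier, so the Rietz argument is the first thing to try.

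Either way the rounding yields $c_N\ge\tfrac{\pi}{4}s_N=\tfrac{\pi}{4}b_N$. Combining the two directions and taking $\sup_N$ gives
$$
\calC_{\calR}(\mu)\le\frakB_{\calR}(\mu)\le\tfrac{4}{\pi}\calC_{\calR}(\mu).
$$
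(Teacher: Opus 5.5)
Your proposal is correct and follows essentially the same route as the paper: the easy direction tests the diagonal domination on vectors with $|c_R|\le1$, and the hard direction uses finite-dimensional SDP duality to identify $\del_{\calR,N}(\mu)$ with $\sup\{\operatorname{Tr}(\Gamma_{\mu,N}X): X\ge0,\ X(R,R)\le1\}$, followed by a Grothendieck-type rounding of the Gram factorization of $X$. The only difference is in the rounding step. You use the positive semidefinite (Rietz--Nesterov/Haagerup) form of the complex Grothendieck inequality, via Gaussian sign rounding and the Schur product theorem, which bounds the quadratic supremum directly with the explicit constant $4/\pi$; padding the $v_R$ to unit length is harmless, since it adds a nonnegative diagonal to $X$ and $\Gamma_{\mu,N}(R,R)\ge0$. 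The paper instead applies the general bilinear complex Grothendieck inequality and then Cauchy--Schwarz for the positive semidefinite form $B$.
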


\subsection{Proof of $\calC_{\calR}(\mu)\lesssim \frakB_{\calR}(\mu)$}

Fix $N\ge0$ and set
\begin{align} \label{20260614eq30}
\calC_{\calR,N}(\mu)
&:=\sup_{\{|c_R|\le1:\,R\in\calR_N\}} \sum_{R_1,R_2\in\calR_N}
\Gamma_{\mu,N}(R_1,R_2)c_{R_1}\overline{c_{R_2}}  \nonumber \\
&=\sup_{\{|c_R|\le1:\,R\in\calR_N\}}\int_{\D}\left|
\sum_{R\in\calR_N}c_R\Phi_R(z)\right|^2 \,d\mu(z).
\end{align} 
Also denote
\begin{equation} \label{20260614eq31}
        \del_{\calR,N}(\mu)
        :=
        \inf\left\{
        \sum_{R\in\calR_N}\lambda_R:
        \lambda_R\ge0,\quad
        \Gamma_{\mu,N}\le
        \diag\left(\{\lambda_R\}_{R\in\calR_N}\right)
        \right\}.
\end{equation} 
Then
$$
        \calC_{\calR}(\mu)=\sup_{N\ge0}\calC_{\calR,N}(\mu),
        \qquad
        \frakB_{\calR}(\mu)=\sup_{N\ge0}\del_{\calR,N}(\mu).
$$

We first prove that
\begin{equation} \label{20260614eq33}
        \calC_{\calR,N}(\mu)\le \del_{\calR,N}(\mu)
\end{equation} 
for every $N\ge0$. Suppose that
\begin{equation} \label{20260614eq40}
\Gamma_{\mu,N}
        \le
        \diag\left(\{\lambda_R\}_{R\in\calR_N}\right)
\end{equation} 
for some nonnegative sequence $\{\lambda_R\}_{R\in\calR_N}$. By definition, for every sequence $\{c_R\}_{R\in\calR_N}$ with $|c_R| \le 1$, one has
$$
\sum_{R_1,R_2\in\calR_N} \Gamma_{\mu,N}(R_1,R_2)c_{R_1}\overline{c_{R_2}} \le \sum_{R\in\calR_N}\lambda_R|c_R|^2 \le
\sum_{R\in\calR_N}\lambda_R.
$$
Taking the supremum over all such coefficients $\{c_R\}$ gives
$$
        \calC_{\calR,N}(\mu)\le \sum_{R\in\calR_N}\lambda_R, 
$$
which further gives \eqref{20260614eq33} by taking infimum on both sides of the above estimate over all $\{ \lambda_R\}_{R \in \calR_N}$ satisfying \eqref{20260614eq40}. Finally, taking the supremum over $N\ge0$, we obtain 
$$
\calC_{\calR}(\mu)\lesssim  \frakB_{\calR}(\mu).
$$

\subsection{Proof of $\frakB_{\calR}(\mu)\lesssim \calC_{\calR}(\mu)$} \label{20260617Sec02}

We now prove the reverse inequality. It suffices to prove that, for every
$N\ge0$,
\begin{equation} \label{20260614eq43}
\del_{\calR,N}(\mu)\lesssim \calC_{\calR,N}(\mu).
\end{equation} 
If $\calC_{\calR,N}(\mu)=+\infty$, there is nothing to prove. Thus we may assume
$\calC_{\calR,N}(\mu)<+\infty$.

\vspace{0.1cm}

Fix $N\ge0$. We first derive the dual formulation of the diagonal domination
problem. Since $\calR_N$ is finite, $\del_{\calR,N}(\mu)$ is the value of the
finite-dimensional semidefinite program
$$
        \del_{\calR,N}(\mu)
        =
        \inf\left\{
        \sum_{R\in\calR_N}\lambda_R:
        \lambda_R\ge0,\quad
        \diag\left(\{\lambda_R\}_{R\in\calR_N}\right)-\Gamma_{\mu,N}\ge0
        \right\}.
$$

We now compute its dual by the Lagrange multiplier method. The constraint
$$
        \diag\left(\{\lambda_R\}_{R\in\calR_N}\right)-\Gamma_{\mu,N}\ge0
$$
means that this finite Hermitian matrix is positive semidefinite.
Therefore the corresponding Lagrange multiplier $X=\{X(R_1,R_2)\}_{R_1,R_2\in\calR_N}$ is also a positive
semidefinite Hermitian matrix of dimension $\# \calR_N \times \# \calR_N$, using the standard trace pairing on Hermitian matrices. 
Thus, for $X\ge0$, consider the Lagrangian
$$
\begin{aligned}
        L(\lambda,X)
        &:=
        \sum_{R\in\calR_N}\lambda_R
        -
        \operatorname{Tr}\left[
        X\left(
        \diag\left(\{\lambda_R\}_{R\in\calR_N}\right)
        -
        \Gamma_{\mu,N}
        \right)
        \right]  \\
        &=
        \sum_{R\in\calR_N}\lambda_R
        -
        \operatorname{Tr}\left[
        X\diag\left(\{\lambda_R\}_{R\in\calR_N}\right)
        \right]
        +
        \operatorname{Tr}(\Gamma_{\mu,N}X).
\end{aligned}
$$
Here we used $\operatorname{Tr}(X\Gamma_{\mu,N})=\operatorname{Tr}(\Gamma_{\mu,N}X)$.
Since
$$
        \operatorname{Tr}\left[
        X\diag\left(\{\lambda_R\}_{R\in\calR_N}\right)
        \right]
        =
        \sum_{R\in\calR_N}\lambda_R X(R,R),
$$
we get
\begin{equation}\label{20260614eqLagrangian}
        L(\lambda,X)
        =
        \operatorname{Tr}(\Gamma_{\mu,N}X)
        +
        \sum_{R\in\calR_N}\lambda_R\bigl(1-X(R,R)\bigr).
\end{equation}

For each fixed $X\ge0$, the dual function is obtained by taking the infimum of
$L(\lambda,X)$ over all $\lambda_R\ge0$. From \eqref{20260614eqLagrangian}, if
there exists $R\in\calR_N$ such that $X(R,R)>1$, then
$1-X(R,R)<0$, and hence
$$
        \inf_{\lambda_R\ge0} L(\lambda,X)=-\infty.
$$
Therefore the dual function is finite only when
$$
        X(R,R)\le1,
        \qquad R\in\calR_N.
$$
Conversely, if $X(R,R)\le1$ for every $R\in\calR_N$, then
$1-X(R,R)\ge0$ for every $R$, and the infimum over $\lambda_R\ge0$ is attained
at $\lambda_R=0$. Hence
$$
        \inf_{\{\lambda_R\ge0\}_{R\in\calR_N}} L(\lambda,X)
        =
        \operatorname{Tr}(\Gamma_{\mu,N}X).
$$
Therefore, by the standard finite-dimensional semidefinite programming duality
(see, e.g., \cite{VandenbergheBoyd1996}), and since the primal problem is strictly
feasible\footnote{Here strict feasibility means that there exists a choice of
$\{\lambda_R\}_{R\in\calR_N}$ such that
$\diag(\{\lambda_R\}_{R\in\calR_N})-\Gamma_{\mu,N}$ is positive definite.
In the present case, taking $\lambda_R=M$ for all $R\in\calR_N$ and $M$
sufficiently large gives $MI-\Gamma_{\mu,N}>0$.} 
we have
\begin{equation}\label{20260614eqSDPdual}
        \del_{\calR,N}(\mu)
        =
        \sup\left\{
        \operatorname{Tr}(\Gamma_{\mu,N}X):
        X\ge0,\quad
        X(R,R)\le1\ \textnormal{for every }R\in\calR_N
        \right\}.
\end{equation}

\vspace{0.1cm}

Now let $X$ be any positive semidefinite matrix satisfying
$$
        X(R,R)\le1,
        \qquad R\in\calR_N.
$$
Since $X\ge0$, it is a Gram matrix, that is, there exist vectors $\{u_R\}_{R\in\calR_N}$ in $\C^{\# \calR_N}$ such that
$$
        X(R_2,R_1)=\langle u_{R_1},u_{R_2}\rangle_{\C^{\# \calR_N}}
$$
    and
\begin{equation} \label{20260614eq50}
        \|u_R\|_{\C^{\# \calR_N}}^2=X(R,R)\le1.
\end{equation} 
Therefore
$$
\begin{aligned}
        \operatorname{Tr}(\Gamma_{\mu,N}X)
        &=
        \sum_{R_1,R_2\in\calR_N}
        \Gamma_{\mu,N}(R_1,R_2)X(R_2,R_1) \\
        &=
        \sum_{R_1,R_2\in\calR_N}
        \Gamma_{\mu,N}(R_1,R_2)
        \langle u_{R_1},u_{R_2}\rangle_{\C^{\# \calR_N}}.
\end{aligned}
$$
By \eqref{20260614eq50} and the finite-dimensional complex Grothendieck inequality (see, e.g., \cite{Pisier2012}),
\begin{equation} \label{20260614eq35}
\left|
        \sum_{R_1,R_2\in\calR_N}
        \Gamma_{\mu,N}(R_1,R_2)
        \langle u_{R_1},u_{R_2}\rangle_{\C^{\# \calR_N}}
\right|
\lesssim 
\sup_{|\alpha_{R}|, \; |\beta_{R}|\le1 }
\left|
        \sum_{R_1,R_2\in\calR_N}
        \Gamma_{\mu,N}(R_1,R_2)\alpha_{R_1}\overline{\beta_{R_2}}
\right|.
\end{equation} 
Define
$$
        B(x,y)
        :=
        \sum_{R_1,R_2\in\calR_N}
        \Gamma_{\mu,N}(R_1,R_2)x_{R_1}\overline{y_{R_2}}.
$$
Since $\Gamma_{\mu,N}$ is positive semidefinite, $B$ is a positive
semidefinite sesquilinear form. Hence the Cauchy--Schwarz inequality for $B$
gives
$$
        |B(\alpha,\beta)|^2
        \le
        B(\alpha,\alpha)B(\beta,\beta).
$$
If $|\alpha_R|\le1$ and $|\beta_R|\le1$ for all $R\in\calR_N$, then by the
definition of $\calC_{\calR,N}(\mu)$,
$$
        B(\alpha,\alpha)\le \calC_{\calR,N}(\mu),
        \qquad
        B(\beta,\beta)\le \calC_{\calR,N}(\mu).
$$
Therefore
$$
\sup_{|\alpha_R|, \; |\beta_R|\le 1}
\left|
        \sum_{R_1,R_2\in\calR_N}
        \Gamma_{\mu,N}(R_1,R_2)
        \alpha_{R_1}\overline{\beta_{R_2}}
\right|
\le
\calC_{\calR,N}(\mu).
$$
Combining this with \eqref{20260614eq35}, we obtain
$$
        \operatorname{Tr}(\Gamma_{\mu,N}X)
        \lesssim
        \calC_{\calR,N}(\mu).
$$
for every positive semidefinite matrix $X$ satisfying $X(R,R)\le1$. Taking the
supremum over all such $X$ in \eqref{20260614eqSDPdual} gives
$$
        \del_{\calR,N}(\mu)\lesssim \calC_{\calR,N}(\mu).
$$
Finally, taking the supremum over $N\ge0$, we obtain
$$
        \frakB_{\calR}(\mu)
        \lesssim \calC_{\calR}(\mu).
$$

\vspace{0.1cm}

The proof of Theorem \ref{boundednessPartII} is complete. Finally, the proof of Theorem \ref{boundedness} follows by combining both Theorems \ref{boundednessPartI} and \ref{boundednessPartII}.

\medskip 

\section{Compactness for the Bloch--Carleson measure problem} \label{Sec5}

In this section, we prove Theorem~\ref{compactness}. By Theorem~\ref{boundednessPartII},
conditions \textnormal{(ii)} and \textnormal{(iii)} in Theorem~\ref{compactness}
are equivalent. Therefore, it remains to prove the equivalence between
conditions \textnormal{(i)} and \textnormal{(iii)}.
We shall use the following standard compactness criterion. 

\begin{lem}
The embedding $id:\mathring{\calB}\rightarrow L^2(\mu)$ is compact if and only if, whenever $\{f_n\}$ is bounded in $\mathring{\calB}$
and $f_n\to0$ uniformly on compact subsets of $\D$, one has $\|f_n\|_{L^2(\mu)}\rightarrow0$. 
\end{lem}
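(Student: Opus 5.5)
The plan is to prove the two directions separately, using two facts about $\mathring{\calB}$. First, the closed unit ball of $\mathring{\calB}$ is a normal family. Indeed, $|f(z)|\lesssim \|f\|_{\calB,*}\log\frac{e}{1-|z|}$ for $f\in\mathring{\calB}$, so the ball is locally uniformly bounded. Second, the ball is closed under locally uniform limits, since $(1-|z|^2)|f'(z)|\le1$ passes to the limit by Weierstrass's theorem and $f(0)=0$ is preserved.

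For the direction ($\Rightarrow$), assume $id$ is compact. Let $\{f_n\}$ be bounded in $\mathring{\calB}$ with $f_n\to0$ locally uniformly. Suppose, for contradiction, that $\|f_n\|_{L^2(\mu)}\not\to0$. Passing to a subsequence, we may assume $\|f_n\|_{L^2(\mu)}\ge\varepsilon>0$. By compactness, a further subsequence converges in $L^2(\mu)$ to some $g$. Then another subsequence converges $\mu$-a.e. to $g$. But $f_n\to0$ pointwise on $\D$, so $g=0$ $\mu$-a.e. This contradicts $\|f_n\|_{L^2(\mu)}\ge\varepsilon$.

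For the direction ($\Leftarrow$), take a bounded sequence $\{f_n\}$ in $\mathring{\calB}$, say $\|f_n\|_{\calB,*}\le1$. By normality there is a subsequence $f_{n_k}\to f$ locally uniformly, and by the closedness observation $f\in\mathring{\calB}$ with $\|f\|_{\calB,*}\le1$. Then $g_k:=f_{n_k}-f$ is bounded in $\mathring{\calB}$ (norm at most $2$) and tends to $0$ locally uniformly. The hypothesis then gives $\|g_k\|_{L^2(\mu)}\to0$, so $f_{n_k}\to f$ in $L^2(\mu)$. This is exactly compactness.

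We need to know that the image lies in $L^2(\mu)$, i.e., that the embedding is well defined and bounded. This follows from the hypothesis itself. If boundedness failed, there would be $h_n\in\mathring{\calB}$ with $\|h_n\|_{\calB,*}\le1$ and $\|h_n\|_{L^2(\mu)}\ge n^2$. Put $f_n:=h_n/n$. Then $\|f_n\|_{\calB,*}\le 1/n\to0$, so $f_n\to0$ locally uniformly by the pointwise growth estimate, yet $\|f_n\|_{L^2(\mu)}\ge n$, contradicting the hypothesis.

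The main point requiring care is the identification of the limit $g=0$ in the first direction, via a.e. convergence. It is routine, but it is where pointwise convergence on $\D$ (not merely $\mu$-a.e.) is used.
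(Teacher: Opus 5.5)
Your proof is correct and complete. The paper gives no proof of this lemma and simply calls it a standard criterion; what you wrote is precisely that standard argument: Montel normality and closedness of the unit ball of $\mathring{\calB}$, plus identification of the $L^2(\mu)$-limit through a $\mu$-a.e.\ convergent subsequence. It is the same Montel-type compactness the paper invokes in its footnote in the proof of Theorem~\ref{compactness}. Your extra step showing that the hypothesis by itself forces the embedding to be bounded (hence well defined into $L^2(\mu)$) is a worthwhile addition that makes the lemma self-contained. The paper does not need it only because it applies the criterion after boundedness has already been obtained from Theorem~\ref{boundednessPartI}.
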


\begin{proof}[Proof of Theorem \ref{compactness}]
We first assume that $id:\mathring{\calB}\to L^2(\mu)$ is compact. Then the
embedding is bounded, and hence, by Theorem~\ref{boundednessPartI},
$$
        \calC_{\calR}(\mu)<\infty
$$
for every admissible dyadic resolution $\calR$ in $\D$.

It remains to prove the vanishing condition. We claim that
\begin{equation} \label{20260614eq76}
        \lim_{\rho\to1^-}\calC_{\calR}(\one_{S_\rho}\mu)=0.
\end{equation} 
By Theorem~\ref{boundednessPartI}, this is equivalent to showing that the best
constant in
$$
        \int_{S_\rho}|f(z)|^2\,d\mu(z)
        \le C_\rho\|f\|_{\calB,*}^2,
        \qquad f\in\mathring{\calB},
$$
satisfies $C_\rho\to0$ as $\rho\to1^-$. Assume not. Then there exist
$\epsilon_0>0$, $\rho_n\to1^-$, and functions $f_n\in\mathring{\calB}$ such that
$$
        \|f_n\|_{\calB,*}\le1,
        \qquad
        \int_{S_{\rho_n}}|f_n(z)|^2\,d\mu(z)\ge\epsilon_0.
$$
Since the unit ball of $\mathring{\calB}$ is compact in the compact--open
topology\footnote{This follows from a standard argument by Montel's theorem.}, after passing to a subsequence we may assume that $f_n$ converges
uniformly on compact subsets of $\D$ to some $f\in\mathring{\calB}$. By the
compactness of the embedding, we also have
$$
        \|f_n-f\|_{L^2(\mu)}\longrightarrow0.
$$
Therefore
$$
\begin{aligned}
        \left(\int_{S_{\rho_n}}|f_n(z)|^2\,d\mu(z)\right)^{1/2}
        &\le
        \|f_n-f\|_{L^2(\mu)}
        +
        \left(\int_{S_{\rho_n}}|f(z)|^2\,d\mu(z)\right)^{1/2}.
\end{aligned}
$$
The first term tends to $0$. The second term also tends to $0$ by dominated
convergence, since $f\in L^2(\mu)$ and $\one_{S_{\rho_n}}\to0$ pointwise in
$\D$. This contradicts the lower bound above. Hence
\eqref{20260614eq76} holds. 

\medskip 

Conversely, assume that, for every admissible dyadic resolution $\calR$ in
$\D$,
$$
        \calC_{\calR}(\mu)<\infty,
        \qquad
        \lim_{\rho\to1^-}\calC_{\calR}(\one_{S_\rho}\mu)=0.
$$
By Theorem~\ref{boundednessPartI}, the embedding
$id:\mathring{\calB}\to L^2(\mu)$ is bounded. Let $\{f_n\}$ be a bounded
sequence in $\mathring{\calB}$ such that $f_n\to0$ uniformly on compact subsets
of $\D$. We prove that
$$
        \|f_n\|_{L^2(\mu)}\to0.
$$
Indeed, let
$$
        M:=\sup_{n\ge1}\|f_n\|_{\calB,*}<\infty.
$$
Given $\epsilon>0$, choose $\rho\in(0,1)$ so close to $1$ that
$$
        \calC_{\calR}(\one_{S_\rho}\mu)M^2<\epsilon.
$$
By Theorem~\ref{boundednessPartI} applied to the measure $\one_{S_\rho}\mu$,
we have
$$
        \int_{S_\rho}|f_n(z)|^2\,d\mu(z)
        \lesssim
        \calC_{\calR}(\one_{S_\rho}\mu)\|f_n\|_{\calB,*}^2
        \lesssim
        \epsilon
$$
uniformly in $n$. On the other hand, since $\mu$ is finite and
$f_n\to0$ uniformly on the compact set $\{|z|\le\rho\}$,
$$
        \int_{\{|z|\le\rho\}}|f_n(z)|^2\,d\mu(z)\longrightarrow0.
$$
Therefore
$$
        \limsup_{n\to\infty}\int_\D |f_n(z)|^2\,d\mu(z)
        \lesssim \epsilon.
$$
Since $\epsilon>0$ is arbitrary, we obtain
$$
        \|f_n\|_{L^2(\mu)}\to0.
$$
By the compactness criterion above, $id:\mathring{\calB}\to L^2(\mu)$ is compact.

\medskip 

The proof of Theorem \ref{compactness} is complete.

\end{proof}

\medskip

\section{The $\calQ_p$--Carleson measure problem} \label{Sec06}

In this section, we explain how the dyadic framework developed above for the Bloch--Carleson measure problem can be adapted to the $\calQ_p$--Carleson measure problem. We begin with the basic definitions. For $0<p\le 1$, the space $\calQ_p$
consists of all $f \in H(\D)$ such that
\begin{equation} \label{20260615DefnQp}
        \|f\|_{\calQ_p,*}^2
        :=
        \sup_{a\in\D}
        \int_{\D}|f'(z)|^2 g(z,a)^p\,dA(z)
        <\infty,
\end{equation} 
where
$$
        g(z,a):=\log\frac{1}{|\sigma_a(z)|},
        \qquad
        \sigma_a(z):=\frac{a-z}{1-\overline a z},
$$
is the Green function of $\D$ with pole at $a$, and $\sigma_a$ is the
automorphism of $\D$ interchanging $0$ and $a$. We equip $\calQ_p$ with the norm
$$
        \|f\|_{\calQ_p}:=|f(0)|+\|f\|_{\calQ_p,*}.
$$
It is well known that $\calQ_1=BMOA$, the space of analytic functions of
bounded mean oscillation, while $\calQ_p=\calB$ for $p>1$. Moreover, for
$0<p_1<p_2<1$, one has the strict inclusions
$\calD\subsetneq \calQ_{p_1}\subsetneq \calQ_{p_2}\subsetneq BMOA$, where $\calD$ denotes the Dirichlet space on $\D$.

\medskip 

For the $\calQ_p$--Carleson measure problem, it is more convenient to use the
Carleson measure characterization of $\calQ_p$ (as in \cite{HuZhou2026}). We turn to some details. For an arc $I\subset\T$, let
$$
       Q_I:=\{z=re^{2\pi i t}:e^{2\pi i t}\in I,\ 1-|I|\le r<1\}
$$
be the Carleson tent associated with $I$, where $|I|$ denotes normalized
arclength (in particular, $Q_{\T}=\D$). Set
$$
        dA_p(z):=(1-|z|^2)^p\,dA(z).
$$
The tent space $\calT_p$ consists of all measurable functions $F$ on $\D$ such
that
$$
        \|F\|_{\calT_p}^2
        :=
        \sup_{I\subset\T}
        \frac{1}{|I|^p}
        \int_{Q_I}|F(z)|^2\,dA_p(z)
        <+\infty.
$$
By the Carleson measure characterization of $\calQ_p$, one has
\begin{equation} \label{20260616eq52}
        f\in\calQ_p
        \quad\Longleftrightarrow\quad
        f'\in\calT_p,
        \qquad
        \|f\|_{\calQ_p,*}\simeq \|f'\|_{\calT_p};
\end{equation} 
see, for instance, \cite[Theorem~1.1.3]{Xiao2006}.

\medskip 

The goal of this section is to study the following $\calQ_p$--Carleson measure
problem: to characterize those positive Borel measures $\mu$ for which the
embedding
\begin{equation}\label{20260614eq01Qp}
        \operatorname{id}:\calQ_p \longrightarrow L^2(\mu)
\end{equation}
is bounded, or compact, for $0<p\le 1$. 

\vspace{0.1cm}

As before, we write
$$
        \mathring{\calQ}_p:=\{f\in\calQ_p:f(0)=0\}.
$$
Since constants belong to $\calQ_p$, the finiteness of $\mu(\D)$ is necessary. Thus the
essential part of the $\calQ_p$--Carleson measure problem is the seminorm embedding
\begin{equation}\label{eq:QpEmbedding}
        \int_\D |f(z)|^2\,d\mu(z)
        \lesssim
        \|f\|_{\calQ_p,*}^2,
        \qquad f\in\mathring{\calQ}_p.
\end{equation}

\subsection{Revisiting the admissible dyadic resolution} \label{20260616Sec01}

We now revisit the polar dyadic resolution from Section~\ref{Sec2.2}. The same
partition of the disc will be used, but the relevant measure is no
longer $dA$; it is $dA_p$. This changes the averaging procedure.

Recall that
$$
        \Theta(s,t)=\sqrt{s}\,e^{2\pi i t},
        \qquad (s,t)\in[0,1)\times[0,1).
$$
For normalized area measure, $\Theta$ is measure preserving. For the weighted
measure $dA_p$, however, the same change of variables gives
$$
        \int_\D \psi(z)\,dA_p(z)
        =
        \int_0^1\int_0^1
        \psi\left(\sqrt{s}e^{2\pi i t}\right)(1-s)^p\,ds\,dt.
$$
Thus $\Theta$ transfers $dA_p$ to the weighted product measure
\begin{equation}\label{eq:weightedPolarChange}
        d\nu_p(s,t):=(1-s)^p\,ds\,dt
\end{equation} 
on $[0,1)^2$. Consequently, the polar dyadic tiles themselves are unchanged,
but their masses are now measured with respect to $A_p$ rather than $A$.

For $R\subset\D$, write
$$
        A_p(R):=\int_R dA_p.
$$
If $R_{N,j,k}$ is the polar tile defined in \eqref{eq:polar_tiles}, then
$$
        A_p(R_{N,j,k})=
        2^{-N}
        \int_{j/2^N}^{(j+1)/2^N}(1-s)^p\,ds \simeq
        2^{-2N}\left(1-\frac{j}{2^N}\right)^p, 
$$
which decreases as the tile approaches the boundary. 

Accordingly, for the $\calQ_p$--Carleson measure problem we use the
$dA_p$-weighted conditional expectation
\begin{equation}\label{eq:weightedEN}
        E_N^{(p)}\psi
        :=
        \sum_{R\in\calR_N}
        \left(
        \frac{1}{A_p(R)}\int_R \psi\,dA_p
        \right)\one_R.
\end{equation}
By the same argument as in the proof of Lemma~\ref{20260614lem20}, together
with the martingale convergence theorem applied to the weighted measure
$d\nu_p$ in \eqref{eq:weightedPolarChange}, we obtain
$$
        E_N^{(p)}\psi \longrightarrow \psi
        \qquad\textnormal{in }L^1(dA_p),
        \qquad \psi\in L^1(dA_p).
$$
Thus the polar dyadic resolution constructed in Section~\ref{Sec2.2} has the
following weighted admissibility properties:

\begin{enumerate}
\item[(a)] Each $\calR_N$ is a finite measurable partition of $\D$;

\item[(b)] The partitions are nested: every tile in $\calR_{N+1}$ is contained,
modulo $A_p$-null sets, in a tile of $\calR_N$;

\item[(c)] The associated $dA_p$-weighted conditional expectations
$$
        E_N^{(p)}\psi
        =
        \sum_{R\in\calR_N}
        \left(
        \frac{1}{A_p(R)}\int_R\psi\,dA_p
        \right)\one_R
$$
converge to $\psi$ in $L^1(dA_p)$ for every $\psi\in L^1(dA_p)$;

\item[(d)] For each $N\ge0$, an arbitrary ordering of the finite set $\calR_N$
is fixed once and for all.
\end{enumerate}
We shall call any sequence $\calR=\{\calR_N\}_{N\ge0}$ of decompositions of $\D$ satisfying the above properties a \emph{$p$--admissible dyadic resolution}. The preceding construction shows that the class of $p$--admissible dyadic resolutions is nonempty. For simplicity, throughout the rest of this section we fix $\calR=\{\calR_N\}_{N \ge 0}$ to be the polar dyadic resolution constructed above.

\vspace{0.1cm}

Recall that, in the Bloch--Carleson measure problem, one important idea is to
use the fact that
$$
{\bf P}:L^\infty(\D)\longrightarrow \calB
$$
is bounded and onto, where we recall ${\bf P}$ denotes the standard Bergman projection on $\D$. In the $\calQ_p$ setting, the source space is no longer
$L^\infty(\D)$, but the tent space $\calT_p$. Therefore, one needs the
corresponding stability estimate for the weighted conditional expectations,
namely the boundedness of
$$
        E_N^{(p)}:\calT_p\longrightarrow \calT_p,
$$
uniformly in $N$. This is parallel to the basic fact used in the Bloch case that $E_N:L^\infty(\D)\longrightarrow L^\infty(\D)$
is bounded.

\begin{lem}\label{lem:QpWeightedAveraging}
For $0<p\le1$, the operators $E_N^{(p)}$ are uniformly bounded on $\calT_p$:
$$
        \|E_N^{(p)}F\|_{\calT_p}
        \lesssim
        \|F\|_{\calT_p},
        \qquad N\ge0.
$$
The implicit constant depends only on $p$ and the polar dyadic resolution $\calR=\{\calR_N\}_{N \ge 0}$.
\end{lem}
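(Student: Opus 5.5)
The plan is to fix an arc $I\subset\T$ and bound $\int_{Q_I}|E_N^{(p)}F|^2\,dA_p$ by $\|F\|_{\calT_p}^2|I|^p$, uniformly in $N$. On each tile $R\in\calR_N$ the function $E_N^{(p)}F$ is the constant $c_R=A_p(R)^{-1}\int_R F\,dA_p$. By Cauchy--Schwarz (Jensen) with respect to the probability measure $dA_p/A_p(R)$ on $R$,
$$
|c_R|^2\le \frac{1}{A_p(R)}\int_R|F|^2\,dA_p .
$$
Hence
$$
\int_{Q_I}|E_N^{(p)}F|^2\,dA_p=\sum_{R\cap Q_I\neq\emptyset}|c_R|^2A_p(R\cap Q_I)
\le \sum_{R\cap Q_I\neq\emptyset}\frac{A_p(R\cap Q_I)}{A_p(R)}\int_R|F|^2\,dA_p .
$$
I would then split into two regimes, comparing $|I|$ with the scale $2^{-N}$ of the level-$N$ tiles. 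Recall that a tile $R_{N,j,k}$ has angular width $2^{-N}$ (normalized). For $r$ bounded away from $0$ its radial thickness is $\simeq 2^{-N}$. Its $A_p$-mass is $A_p(R_{N,j,k})\simeq 2^{-2N}(1-j2^{-N})^p$, which is $\simeq 2^{-N(2+p)}$ for the outermost ring $j=2^N-1$.

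\emph{Case 1: $|I|\ge c\,2^{-N}$ for a small fixed constant $c$.} I would bound the ratio $A_p(R\cap Q_I)/A_p(R)$ by $1$. The key geometric claim is that every tile meeting $Q_I$ lies in $Q_{CI}$, the concentric arc enlarged by a fixed factor $C$, with the convention that $Q_{CI}=\D$ once $C|I|\ge1$. Tiles near the origin have large radial extent, but they only meet $Q_I$ when $|I|$ is comparable to $1$, and then $Q_{CI}=\D$ handles them. Otherwise the tile has angular and radial size $\lesssim 2^{-N}\lesssim|I|$, so it stays within a $C|I|$-neighbourhood of $Q_I$. Since the tiles are disjoint, the sum is then at most $\int_{Q_{CI}}|F|^2\,dA_p\le \|F\|_{\calT_p}^2 (C|I|)^p$. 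If $CI$ is not an arc of length at most $1$, I would instead cover $Q_{CI}\cap\D$ by boundedly many tents.

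\emph{Case 2: $|I|<c\,2^{-N}$.} Then $Q_I$ lies in the outermost ring and meets at most two tiles $R$, each of which is contained in a tent $Q_J$ with $|J|\simeq2^{-N}$. For such a tile I would use
$$
A_p(R\cap Q_I)\le A_p(Q_I)\simeq|I|^{2+p}, \qquad \int_R|F|^2\,dA_p\le\|F\|_{\calT_p}^2|J|^p\simeq \|F\|_{\calT_p}^2\,2^{-Np}.
$$
Together with $A_p(R)\simeq 2^{-N(2+p)}$ this gives a contribution
$$
\lesssim |I|^{2+p}\,2^{N(2+p)}\,2^{-Np}\,\|F\|_{\calT_p}^2=(2^N|I|)^2|I|^p\|F\|_{\calT_p}^2\lesssim|I|^p\|F\|_{\calT_p}^2 .
$$
Taking the supremum over $I$ then gives the uniform bound $\|E_N^{(p)}F\|_{\calT_p}\lesssim\|F\|_{\calT_p}$. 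The case $N=0$ is trivial, since then $E_0^{(p)}F$ is a single constant and Case 2 applies with $R=\D$.

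I expect the main obstacle to be the geometric containment in Case 1, because the polar tiles are not Whitney-type near the origin: for small $j$ their radial extent is $\sqrt{(j+1)2^{-N}}-\sqrt{j2^{-N}}$, which can be much larger than $2^{-N}$. I would handle this by separating tiles with $r\ge 1/2$, whose radial thickness is $\le 2^{-N}$, from tiles meeting $\{|z|<1/2\}$. The latter can only meet $Q_I$ when $|I|\ge1/2$, and in that case both sides are comparable to $\int_\D|F|^2\,dA_p\le\|F\|_{\calT_p}^2$. The constants then depend only on $p$, as claimed in the lemma.
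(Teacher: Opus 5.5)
Your proposal is correct and follows the paper's proof essentially step by step. Both arguments apply Jensen on each tile and then split into two regimes. For $|I|\gtrsim 2^{-N}$, the ratio $A_p(R\cap Q_I)/A_p(R)$ is bounded by $1$ and the tiles are absorbed into an enlarged tent with $|I^*|\simeq|I|$. For $|I|\lesssim 2^{-N}$, only $O(1)$ tiles meet $Q_I$, the ratio is $\lesssim (2^N|I|)^{2+p}$, and each tile sits inside a tent of size $\simeq 2^{-N}$.

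The only difference is that the paper pulls everything back to $[0,1)^2$ via $\Theta$. There the tiles are genuine dyadic squares of side $2^{-N}$, and $Q_I$ becomes a rectangle of height $1-(1-|I|)^2\simeq|I|$. So the non-Whitney behaviour of polar tiles near the origin, which you handle separately, never arises.

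One small slip in that separate treatment: a tile meeting $\{|z|<1/2\}$ can meet $Q_I$ once $|I|>1-\sqrt{1/4+2^{-N}}$, which is about $0.13$ when $N=1$. So the threshold $|I|\ge 1/2$ should be $|I|\gtrsim 1$. That weaker statement is all your argument needs.
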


\begin{proof}
It suffices to show that for any $I \subseteq \T$, 
\begin{equation} \label{20260616eq10}
\frac{1}{|I|^p} \int_{Q_I} \left| E_N^{(p)} F \right|^2dA_p(z) \lesssim \|F\|^2_{\calT_p},
\end{equation} 
uniformly in $N$. Take any $I \subseteq \T$.

\vspace{0.1cm}

We first pull back the left-hand side of \eqref{20260616eq10} to the square
$[0,1)^2$ through the polar change of variable. Recall that
$$
        \Theta(s,t)=\sqrt{s}\,e^{2\pi i t},
        \qquad
        d\nu_p(s,t):=(1-s)^p\,ds\,dt,
$$
so that $\Theta$ pulls $dA_p$ back to $d\nu_p$. Denote
$\widetilde F:=F\circ\Theta$. Then a direct computation yields
$$
        (E_N^{(p)}F)\circ\Theta
        =
        \widetilde E_N^{(p)}\widetilde F,
$$
where $\widetilde E_N^{(p)}$ is the conditional expectation on $[0,1)^2$
with respect to the dyadic squares of side length $2^{-N}$ and the weighted measure $d\nu_p$. Moreover, the inverse image of the Carleson tent $Q_I$ under $\Theta$ is the rectangle
$$
        \widetilde Q_I
        :=
        \Theta^{-1}(Q_I)
        =
        \left\{(s,t): e^{2\pi i t} \in I,\ (1-|I|)^2\le s<1\right\}.
$$
Notice that $1-(1-|I|)^2=|I|(2-|I|)\simeq |I|$. Thus \eqref{20260616eq10} is reduced to proving
\begin{equation}  \label{20260616eq11}
        \frac{1}{|I|^p}
        \int_{\widetilde Q_I}
        \left|\widetilde E_N^{(p)}\widetilde F(s,t)\right|^2\,d\nu_p(s,t)
        \lesssim
        \|F\|_{\calT_p}^2,
\end{equation}
uniformly in $N$.

\vspace{0.1cm}

To prove \eqref{20260616eq11}, let $\mathcal S_N(I)$ be the collection of dyadic squares $S$ of side length
$2^{-N}$ which intersect $\widetilde Q_I$. Since
$\widetilde E_N^{(p)}\widetilde F$ is constant on each such square, we have 
\begin{align}  \label{20260616eq34}
\int_{\widetilde Q_I} |\widetilde E_N^{(p)}\widetilde F|^2\,d\nu_p
&=\sum_{S\in\mathcal S_N(I)}
\nu_p(S\cap \widetilde Q_I)
\left|\frac{1}{\nu_p(S)} \int_S \widetilde F\,d\nu_p \right|^2    \nonumber \\
&\le \sum_{S\in\mathcal S_N(I)} \frac{\nu_p(S\cap \widetilde Q_I)}{\nu_p(S)} \int_S |\widetilde F|^2\,d\nu_p .
\end{align} 

We now split into two cases.

First assume that $|I| \ge 2^{-N}$. Then the union of all dyadic squares
intersecting $\widetilde Q_I$ is contained in an enlarged rectangle of the form
$$
        \widetilde Q_{I^*}
        =
        \left\{(s,t): e^{2\pi it}\in I^*,\ (1-|I^*|)^2\le s<1\right\},
$$
where $I^* \subseteq \T$ is an arc with $I \subseteq I^*$ and $|I^*|\simeq |I|$ (here, the arc $I^*$ is not required to have the same center as $I$). Hence, using
$\nu_p(S\cap\widetilde Q_I)\le \nu_p(S)$, we get
$$
\begin{aligned}
\textnormal{RHS of \eqref{20260616eq34}}
&\le \sum_{S\in\mathcal S_N(I)} \int_S |\widetilde F|^2\,d\nu_p \le \int_{\widetilde Q_{I^*}}|\widetilde F|^2\,d\nu_p         \\
&=\int_{Q_{I^*}}|F(z)|^2\,dA_p(z)=|I^*|^p \cdot \frac{1}{|I^*|^p} \int_{Q_{I^*}} |F(z)|^2dA_p(z)\\
&\le \|F\|_{\calT_p}^2 |I^*|^p \lesssim \|F\|_{\calT_p}^2 |I|^p .
\end{aligned}
$$

Next assume that $|I|<2^{-N}$. Since both the horizontal length and the vertical
height of $\widetilde Q_I$ are $\lesssim |I|$, the rectangle $\widetilde Q_I$
intersects only $O(1)$ dyadic squares of side length $2^{-N}$. For each such square
$S$, a direct computation yields 
$$
        \frac{\nu_p(S\cap \widetilde Q_I)}{\nu_p(S)}
        \le
        \frac{\nu_p(\widetilde Q_I)}{\nu_p(S)}
        \lesssim
        \frac{|I|^{p+2}}{(2^{-N})^{p+2}}
        =
        (2^N|I|)^{p+2}.
$$
Moreover, since $S$ intersects $\widetilde Q_I$ and $|I|<2^{-N}$, the
image $\Theta(S)$ is contained in a Carleson tent $Q_{I_S}$ for some $I_S \subseteq \T$ with $|I_S|\simeq 2^{-N}$. Therefore
$$
        \int_S|\widetilde F|^2\,d\nu_p
        =
        \int_{\Theta(S)}|F(z)|^2\,dA_p(z)
        \le
        \int_{Q_{I_S}}|F(z)|^2\,dA_p(z)
        \le
        \|F\|_{\calT_p}^2 (2^{-N})^p .
$$
Combining these estimates and using that only $O(1)$ squares are involved, we obtain
$$
\begin{aligned}
\textnormal{RHS of \eqref{20260616eq34}}
        &\lesssim
        \sum_{S\in\mathcal S_N(I)}
        (2^N|I|)^{p+2}\,
        \|F\|_{\calT_p}^2 (2^{-N})^p                         \\
        &\lesssim
        \|F\|_{\calT_p}^2 \cdot 
        (2^N|I|)^{p+2}(2^{-N})^p                              \\
        &=
        \|F\|_{\calT_p}^2 \cdot 
        |I|^p(2^N|I|)^2                                      \\
        &\le
        \|F\|_{\calT_p}^2 |I|^p,
\end{aligned}
$$
where the last inequality follows from $2^N|I|<1$.

\vspace{0.1cm}
Finally, combining both cases above gives \eqref{20260616eq11}. The proof is complete. 
\end{proof}

\subsection{Weighted Bergman projection and the $\calQ_p$ packet matrix}

We now introduce the $\calQ_p$ analogue of the reduced Bergman projection and
of the Bergman packets used in the Bloch part. Let ${\bf B}_p$ denote the \emph{weighted
Bergman projection associated with the measure
$dA_p$}. We normalize its kernel so that ${\bf B}_p$ is the identity on the analytic weighted
Bergman space $A^2(dA_p)$. Thus
\begin{equation}\label{eq:BpDef}
        {\bf B}_p \psi(z)
        :=
        \int_\D \psi(w)K_{{\bf B}_p}(z,w)\,dA_p(w),
        \qquad
        K_{{\bf B}_p}(z,w):=\frac{c_p}{(1-z\overline w)^{2+p}},
\end{equation}
where $c_p>0$ is the normalizing constant.

\vspace{0.1cm}

Let us first introduce the reduced weighted Bergman operator in the
$\calQ_p$ setting.

\medskip

\noindent {\bf Heuristic.}
The motivation comes from representing the normalized function
$f-f(0)$ by an integral operator.

In the Bloch setting, since $\calB\subseteq L^2(\D,dA)$, every
$f\in\calB$ belongs to the Bergman space $A^2(\D)$. Hence the reproducing
property of the Bergman projection gives
$$
{\bf P}f=f.
$$
Therefore
\begin{align*}
        f(z)-f(0)
        &=
        \int_\D f(w)K(z,w)\,dA(w)
        -
        \int_\D f(w)K(0,w)\,dA(w)                                      \\
        &=
        \int_\D f(w)\bigl[K(z,w)-K(0,w)\bigr]\,dA(w)                    \\
        &=
        \int_\D f(w)K_0(z,w)\,dA(w)
        =
        {\bf P}_0 f(z).
\end{align*}

We now mimic this argument in the $\calQ_p$ setting. The main difference is that, while in the Bloch setting one uses the fact that
$f\in A^2(\D)$, in the $\calQ_p$ setting the corresponding fact is that
$f'\in A^2(dA_p)$ (this is clear from the definition of the $\calQ_p$ space). Hence,  the reproducing property of the weighted Bergman projection \eqref{eq:BpDef} gives
$$
        {\bf B}_p f'=f'.
$$
Consequently,
\begin{align} \label{20260616eq54}
f(z)-f(0)
&=\int_0^z f'(\xi)\,d\xi=\int_0^z ({\bf B}_p f')(\xi)\,d\xi \nonumber \\
&=\int_0^z \left(\int_\D f'(w)K_{{\bf B}_p}(\xi,w)\,dA_p(w) \right)d\xi \nonumber \\
&=\int_\D f'(w) \left(\int_0^z K_{{\bf B}_p}(\xi,w)\,d\xi \right)dA_p(w).
\end{align}

\bigskip 

\noindent The above heuristic motivates the following

\begin{defn}
Denote
$$
L_{{\bf B}_p}(z,w):=\int_0^z K_{{\bf B}_p}(\xi,w)\,d\xi .
$$
The integral is path independent, since $K_{{\bf B}_p}(\xi,w)$ is holomorphic in $\xi$, and hence well--defined. We then define the \emph{reduced weighted Bergman operator ${\bf B}_{p,0}$} by
$$
        {\bf B}_{p,0}\psi(z)
        :=
        \int_\D \psi(w)L_{{\bf B}_p}(z,w)\,dA_p(w),
$$
for measurable functions $\psi$ for which the integral is well--defined.
\end{defn} 
In particular, ${\bf B}_{p, 0}$ is well-defined when $\psi \in L^1(dA_p)$. Indeed, for any $z \in \D$
\begin{align*}
\left| {\bf B}_{p,0}\psi(z) \right|
& \lesssim \int_\D \left|\psi(w) \right| \left( \int_0^z \frac{d\xi}{|1- \xi \bar{w}|^{2+p}} \right) dA_p(w) \\ 
& \le \frac{|z|}{(1-|z|)^{2+p}} \int_{\D} |\psi(w)| dA_p(w) \\
& \le \frac{|z|}{(1-|z|)^{2+p}} \cdot \left\|\psi \right\|_{L^1(dA_p)},
\end{align*}
which gives the desired assertion. 

\vspace{0.1cm}

We consider several properties for ${\bf B}_{p, 0}$.

\begin{lem}\label{lem:Bp0Primitive}
Let $\psi\in L^1(dA_p)$. Then, for every $z\in\D$,
$$
        {\bf B}_{p,0}\psi(z)
        =
        \int_0^z {\bf B}_p\psi(\zeta)\,d\zeta .
$$
In particular,
$$
        ({\bf B}_{p,0}\psi)'={\bf B}_p\psi,
        \qquad
        {\bf B}_{p,0}\psi(0)=0.
$$
\end{lem}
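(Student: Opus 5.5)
The plan is to justify interchanging the order of integration in the definition of ${\bf B}_{p,0}\psi$ by Fubini--Tonelli, and then read off the two consequences.

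\textbf{Step 1 (setting up the interchange).} Fix $z\in\D$ and parametrize the radial segment by $\zeta=tz$, $t\in[0,1]$, so that $d\zeta=z\,dt$. By definition,
$$
{\bf B}_{p,0}\psi(z)=\int_\D \psi(w)\left(\int_0^1 K_{{\bf B}_p}(tz,w)\,z\,dt\right)dA_p(w).
$$
For $t\in[0,1]$ and $w\in\D$ we have $|1-tz\overline w|\ge 1-|z|$. Hence
$$
|\psi(w)K_{{\bf B}_p}(tz,w)z|\le \frac{c_p|z|\,|\psi(w)|}{(1-|z|)^{2+p}}.
$$
This bound lies in $L^1(dt\times dA_p)$ on $[0,1]\times\D$, because $\psi\in L^1(dA_p)$; this is essentially the estimate displayed just before the lemma. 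Fubini's theorem therefore lets us swap the integrals, giving
$$
{\bf B}_{p,0}\psi(z)=\int_0^1\left(\int_\D \psi(w)K_{{\bf B}_p}(tz,w)\,dA_p(w)\right)z\,dt=\int_0^z {\bf B}_p\psi(\zeta)\,d\zeta.
$$
The only care needed is the uniform bound on the segment, which holds since $|tz|\le|z|<1$.

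\textbf{Step 2 (holomorphy of ${\bf B}_p\psi$).} Next I check that ${\bf B}_p\psi$ is holomorphic on $\D$. On each compact set $|\zeta|\le r<1$ the kernel is dominated uniformly by $c_p(1-r)^{-2-p}$. Continuity of ${\bf B}_p\psi$ then follows by dominated convergence. Holomorphy follows from Morera's theorem combined with Fubini, or alternatively by differentiating under the integral sign.

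\textbf{Step 3 (consequences).} Since ${\bf B}_p\psi$ is holomorphic on the simply connected domain $\D$, its integral from $0$ to $z$ is path independent and defines a primitive. So ${\bf B}_{p,0}\psi$ is holomorphic, with $({\bf B}_{p,0}\psi)'={\bf B}_p\psi$. Finally, ${\bf B}_{p,0}\psi(0)=0$ holds trivially, since the path of integration degenerates when $z=0$.

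I expect no real obstacle; the only point requiring attention is the domination in Step 1 that justifies Fubini.
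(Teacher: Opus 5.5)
Your proposal is correct and is essentially the paper's argument. The paper's one-line proof points back to the heuristic computation preceding the definition of ${\bf B}_{p,0}$, whose last step is exactly the interchange $\int_0^z\int_\D=\int_\D\int_0^z$ that you carry out. What you add is the justification the paper leaves implicit: the uniform bound $|K_{{\bf B}_p}(tz,w)|\le c_p(1-|z|)^{-2-p}$ on the radial segment that licenses Fubini, and the check that ${\bf B}_p\psi$ is holomorphic, so that the integral from $0$ to $z$ is path independent and is a primitive with derivative ${\bf B}_p\psi$.
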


\begin{proof}
The above assertions follow immediately from the computation in \eqref{20260616eq54}.
\end{proof}

The next result can be viewed as a $\calQ_p$ counterpart for Lemmas \ref{20260614lem01} and \ref{20260614lem02}. 

\begin{prop}\label{lem:Bp0BoundedOnto}
Let $0<p\le1$. The operator
$$
        {\bf B}_{p,0}:\calT_p\longrightarrow \mathring{\calQ}_p
$$
is bounded and onto. More precisely,
$$
        \|{\bf B}_{p,0}\psi\|_{\calQ_p,*}
        \lesssim
        \|\psi\|_{\calT_p},
        \qquad \psi\in\calT_p.
$$
Moreover, for every $f\in\mathring{\calQ}_p$, one may take $\psi=f'$ and obtain
$$
        {\bf B}_{p,0}\psi=f,
        \qquad
        \|\psi\|_{\calT_p}\simeq \|f\|_{\calQ_p,*}.
$$
\end{prop}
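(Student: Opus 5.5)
By Lemma~\ref{lem:Bp0Primitive}, $({\bf B}_{p,0}\psi)'={\bf B}_p\psi$ and ${\bf B}_{p,0}\psi(0)=0$. Using \eqref{20260616eq52}, both assertions therefore reduce to statements about ${\bf B}_p$ acting on the tent space:
$$
\|{\bf B}_{p,0}\psi\|_{\calQ_p,*}\simeq\|{\bf B}_p\psi\|_{\calT_p}.
$$
The plan has two parts: (a) show that ${\bf B}_p$ is bounded on $\calT_p$; (b) show that ${\bf B}_p$ reproduces derivatives of $\calQ_p$ functions.

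\textbf{Part (b): surjectivity.} Take $f\in\mathring{\calQ}_p$ and $\psi=f'$. Then $\|\psi\|_{\calT_p}\simeq\|f\|_{\calQ_p,*}$ by \eqref{20260616eq52}, so $\psi\in\calT_p$. Taking $I=\T$ in the tent norm gives $\int_\D|f'|^2\,dA_p\le\|f'\|_{\calT_p}^2<\infty$. Hence $f'\in A^2(dA_p)$ and ${\bf B}_pf'=f'$ by the normalization in \eqref{eq:BpDef}. By Lemma~\ref{lem:Bp0Primitive}, ${\bf B}_{p,0}f'(z)=\int_0^z f'=f(z)-f(0)=f(z)$, exactly as in the heuristic computation \eqref{20260616eq54}. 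Since $\calT_p\subset L^2(dA_p)\subset L^1(dA_p)$, ${\bf B}_{p,0}$ is well defined on all of $\calT_p$.

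\textbf{Part (a): boundedness of ${\bf B}_p$ on $\calT_p$.} This is the main step. Fix an arc $I$ and split $\psi=\psi\one_{Q_{2I}}+\psi\one_{\D\setminus Q_{2I}}=:\psi_1+\psi_2$.

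For the local piece, ${\bf B}_p$ is an orthogonal projection on $L^2(dA_p)$. This gives
$$
\int_{Q_I}|{\bf B}_p\psi_1|^2\,dA_p\le\int_{Q_{2I}}|\psi|^2\,dA_p\lesssim|I|^p\|\psi\|_{\calT_p}^2 .
$$

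For the far piece, use the kernel decay $|K_{{\bf B}_p}(z,w)|\lesssim|1-z\overline w|^{-2-p}$. Decompose $\D\setminus Q_{2I}$ into the annular regions $Q_{2^{k+1}I}\setminus Q_{2^kI}$, $k\ge1$. For $z\in Q_I$ and $w$ in the $k$-th region, $|1-z\overline w|\gtrsim2^k|I|$. By Cauchy--Schwarz and the tent condition,
$$
\int_{Q_{2^{k+1}I}}|\psi|\,dA_p\le\bigl(A_p(Q_{2^{k+1}I})\bigr)^{1/2}\Bigl(\int_{Q_{2^{k+1}I}}|\psi|^2dA_p\Bigr)^{1/2}\lesssim(2^k|I|)^{(2+p)/2}(2^k|I|)^{p/2}.
$$
Hence
$$
|{\bf B}_p\psi_2(z)|\lesssim\|\psi\|_{\calT_p}\sum_{k\ge1}(2^k|I|)^{-2-p}(2^k|I|)^{1+p}=\|\psi\|_{\calT_p}\sum_{k\ge1}(2^k|I|)^{-1}.
$$
This series converges, with sum $\lesssim|I|^{-1}$. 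Integrating over $Q_I$, whose $A_p$-mass is $\simeq|I|^{2+p}$, gives $\int_{Q_I}|{\bf B}_p\psi_2|^2\,dA_p\lesssim|I|^{p}\|\psi\|_{\calT_p}^2$, as required.

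\textbf{Expected obstacle.} The bookkeeping in the far piece is where I expect difficulty: once $2^kI$ reaches all of $\T$, the regions must be cut off at $Q_\T=\D$, and the pointwise kernel bound $|1-z\overline w|\gtrsim 2^k|I|$ must be justified for $z\in Q_I$. If the crude pointwise bound fails there, I would replace it with a Schur-test estimate with weight $(1-|w|^2)^{-s}$ applied to each annular piece. Alternatively, I would quote the known boundedness of weighted Bergman projections on tent spaces (as in \cite{Xiao2006}).
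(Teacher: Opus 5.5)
Your proof is correct. Its outer structure is the same as the paper's: reduce via Lemma~\ref{lem:Bp0Primitive} and the tent characterization \eqref{20260616eq52}, then get surjectivity by taking $\psi=f'$ and using $f'\in L^2(dA_p)$, so that ${\bf B}_pf'=f'$.

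The genuine difference is in the key step, the uniform bound $\|{\bf B}_p\psi\|_{\calT_p}\lesssim\|\psi\|_{\calT_p}$. The paper simply imports this from \cite[Lemma 2.2]{HuZhou2026}. You prove it directly by the standard near/far splitting, using only:
\begin{enumerate}
\item the $L^2(dA_p)$-contractivity of the orthogonal projection ${\bf B}_p$ applied to $\psi\one_{Q_{2I}}$;
\item the kernel bound $|1-z\overline w|^{-2-p}$ on the annuli $Q_{2^{k+1}I}\setminus Q_{2^kI}$;
\item the mass estimate $A_p(Q_J)\simeq|J|^{2+p}$.
\end{enumerate}
Your route makes the proposition self-contained and gives a pointwise bound $|{\bf B}_p\psi_2|\lesssim|I|^{-1}\|\psi\|_{\calT_p}$ on $Q_I$ for the off-diagonal part. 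In the far-part computation the powers of $2^k|I|$ combine to $(2^k|I|)^{-1}$ regardless of $p$, so this step does not use $p\le1$. The paper's route is shorter but depends on the earlier work.

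The obstacle you anticipate does not arise. Take $z\in Q_I$ and $w\notin Q_{2^kI}$ with $k\ge1$. Then one of two things holds:
\begin{enumerate}
\item the argument of $w$ lies outside $2^kI$, so the angular separation from $z$, and hence $|1-z\overline w|$, is $\gtrsim 2^k|I|$;
\item $1-|w|>2^k|I|$, so $|1-z\overline w|\ge1-|w|>2^k|I|$.
\end{enumerate}
Also, the annuli are empty once $2^k|I|\ge1$, because then $Q_{2^kI}=\D$. So the sum over $k$ is a finite geometric sum bounded by $|I|^{-1}$, and no Schur test is needed.
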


\begin{proof}
Let $\psi\in\calT_p$. Since
$\calT_p\subset L^2(dA_p)\subset L^1(dA_p)$. Hence, by Lemma \ref{lem:Bp0Primitive}, 
\begin{equation} \label{20260616eq50}
        ({\bf B}_{p,0}\psi)'={\bf B}_p\psi,
        \qquad
        {\bf B}_{p,0}\psi(0)=0.
\end{equation} 
Moreover, by \cite[Lemma 2.2]{HuZhou2026}, one has
\begin{equation} \label{20260616eq51}
        \|{\bf B}_p\psi\|_{\calT_p}
        \lesssim
        \|\psi\|_{\calT_p}.
\end{equation} 
Therefore, combining \eqref{20260616eq50} and \eqref{20260616eq51} with the tent characterization of $\calQ_p$ (see \eqref{20260616eq52}), we obtain
$$
        \|{\bf B}_{p,0}\psi\|_{\calQ_p,*}
        \simeq
        \|({\bf B}_{p,0}\psi)'\|_{\calT_p}
        =
        \|{\bf B}_p\psi\|_{\calT_p}
        \lesssim
        \|\psi\|_{\calT_p}.
$$
Thus ${\bf B}_{p,0}$ maps $\calT_p$ boundedly into $\mathring{\calQ}_p$.

\vspace{0.1cm}

It remains to prove surjectivity. Let $f\in\mathring{\calQ}_p$ and set
$\psi=f'$. By the tent characterization of $\calQ_p$, we have
$\psi\in\calT_p$ and
$$
        \|\psi\|_{\calT_p}\simeq \|f\|_{\calQ_p,*}, 
$$
which further implies $\psi \in L^2(dA_p)$ and hence ${\bf B}_p \psi=\psi$. Consequently,
$$
        {\bf B}_{p,0}\psi(z)
        =
        \int_0^z {\bf B}_p\psi(\zeta)\,d\zeta
        =
        \int_0^z \psi(\zeta)\,d\zeta
        =
        f(z)-f(0).
$$
Since $f\in\mathring{\calQ}_p$, we have $f(0)=0$, and hence
${\bf B}_{p,0}\psi=f$. This proves that ${\bf B}_{p,0}$ is onto.
\end{proof}

\medskip

We are ready to define the $\calQ_p$ packet matrix. Let $\calR=\{\calR_N\}_{N \ge 0}$ be the polar dyadic resolution on $\D$ defined in Section \ref{20260616Sec01}. For $R\in\calR_N$, define the \emph{$\calQ_p$ packet associated with $R$} by
$$
\Psi_R^{(p)}(z):=\left({\bf B}_{p,0}\one_R\right)(z)=\int_R L_{{\bf B}_p}(z,w)\,dA_p(w).
$$
Let $\mu$ be a positive Borel measure on $\D$. For each $N\ge0$, define the
\emph{$\calQ_p$ packet matrix} by
$$
        \Gamma_{\mu,N}^{(p)}(R_1,R_2)
        :=
        \int_\D
        \Psi_{R_1}^{(p)}(z)
        \overline{\Psi_{R_2}^{(p)}(z)}\,d\mu(z),
        \qquad R_1,R_2\in\calR_N.
$$
As in the Bloch case, whenever all diagonal entries are finite, the matrix
$\Gamma_{\mu,N}^{(p)}$ is Hermitian and positive semidefinite.

\subsection{Boundedness for the $\calQ_p$--Carleson measure problem: Part I}

In this subsection we introduce the first dyadic capacity associated with the
$\calQ_p$--Carleson measure problem and identify it with the seminorm embedding \eqref{eq:QpEmbedding}. We begin with the motivation. Recall that the dyadic capacity
$\calC_{\calR}(\mu)$ in the Bloch setting, see \eqref{20260614eq21X}, is built
from the mapping scheme
$$
        L^\infty(\D)
        \xrightarrow{\hspace{.3cm} {\bf P}_0 \hspace{.3cm}}
        \mathring{\calB}
        \xrightarrow{\hspace{.3cm} id \hspace{.3cm}}
        L^2(\mu).
$$
At the dyadic level, the corresponding source constraint is
$$
        \left\|
        \sum_{R\in\calR_N}c_R\one_R
        \right\|_{L^\infty(\D)}
        =
        \sup_{R\in\calR_N}|c_R|
        \le 1.
$$

In the present $\calQ_p$ setting, the relevant source space is no longer
$L^\infty(\D)$ but the tent space $\calT_p$. More precisely, the corresponding
mapping scheme is
$$
        \calT_p
        \xrightarrow{\hspace{.3cm} {\bf B}_{p,0} \hspace{.3cm}}
        \mathring{\calQ}_p
        \xrightarrow{\hspace{.3cm} id \hspace{.3cm}}
        L^2(\mu).
$$
Thus the dyadic source constraint is naturally imposed in the tent norm. For a
coefficient vector $c=\{c_R\}_{R\in\calR_N}$, define
$$
        \|c\|_{\calX_{p,N}}
        :=
        \left\|
        \sum_{R\in\calR_N}c_R\one_R
        \right\|_{\calT_p}.
$$
Since the tiles in $\calR_N$ are disjoint, this norm can be written as
\begin{equation} \label{eq:XpnTentConstraint}
\|c\|_{\calX_{p,N}}^2=\sup_{I\subset\T} \frac{1}{|I|^p} \sum_{R\in\calR_N} |c_R|^2 A_p(R\cap Q_I).
\end{equation}

Now we are ready to introduce the first dyadic capacity associated to the $\calQ_p$--Carleson embedding problem. 

\begin{defn}\label{def:QpSourceCapacity}
Let $\mu$ be a positive Borel measure on $\D$. The dyadic $p$--capacity
associated with the $\calQ_p$--Carleson measure problem is defined by\footnote{For the dyadic $p$--capacity $\calC_{p,\calR}(\mu)$ and the
$\calQ_p$--capacity $\mathfrak Q_{p,\calR}(\mu)$ defined later in
\eqref{eq:QpMatrixCapacity}, we use the same convention as in
Remark~\ref{20260615rem01}.}

$$
\calC_{p,\calR}(\mu):=\sup_{N\ge0}
        \sup_{\|c\|_{\calX_{p,N}}\le1}
        \int_\D
        \left|\sum_{R\in\calR_N} c_R \Psi_R^{(p)}(z)
        \right|^2
        \,d\mu(z).
$$
Equivalently,
$$
        \calC_{p,\calR}(\mu)
        =
        \sup_{N\ge0}
        \sup_{\|c\|_{\calX_{p,N}}\le1}
        c^*\Gamma_{\mu,N}^{(p)}c .
$$
\end{defn}

We first show that this capacity is exactly the dyadic quantity controlling the
seminorm embedding \eqref{eq:QpEmbedding}.

\begin{thm}\label{thm:QpBoundedPartI}
Let $0<p\le1$,  $\mu$ be a finite positive Borel measure on $\D$, and $\calR=\{\calR_N\}_{N \ge 0}$ be the polar dyadic resolution constructed in Section \ref{20260616Sec01}. Then the
seminorm embedding
\begin{equation} \label{eq:QpEmbeddingA}
        \int_\D |f(z)|^2\,d\mu(z)
        \lesssim
        \|f\|_{\calQ_p,*}^2,
        \qquad f\in\mathring{\calQ}_p,
\end{equation}
holds if and only if
$$
        \calC_{p,\calR}(\mu)<+\infty.
$$
Moreover,
$$
        \calC_{p,\calR}(\mu)
        \simeq
        C_{\mathring{\calQ}_p}(\mu),
$$
where $C_{\mathring{\calQ}_p}(\mu)$ denotes the best constant in
\eqref{eq:QpEmbeddingA}.
\end{thm}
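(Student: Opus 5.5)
The plan mirrors the proof of Theorem~\ref{boundednessPartI}. We replace $L^\infty(\D)$ by $\calT_p$, ${\bf P}_0$ by ${\bf B}_{p,0}$, and $E_N$ by $E_N^{(p)}$. The two inputs are Proposition~\ref{lem:Bp0BoundedOnto}, which gives boundedness and surjectivity of ${\bf B}_{p,0}:\calT_p\to\mathring{\calQ}_p$, and Lemma~\ref{lem:QpWeightedAveraging}, which gives uniform boundedness of $E_N^{(p)}$ on $\calT_p$.

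\emph{Necessity.} Assume \eqref{eq:QpEmbeddingA} holds with best constant $C_{\mathring{\calQ}_p}(\mu)$. Fix $N$ and $c$ with $\|c\|_{\calX_{p,N}}\le 1$, and put $\psi=\sum_{R\in\calR_N}c_R\one_R$. By definition $\|\psi\|_{\calT_p}=\|c\|_{\calX_{p,N}}\le1$. By linearity, ${\bf B}_{p,0}\psi=\sum_R c_R\Psi^{(p)}_R$; since $\psi$ is a finite sum of bounded functions, it lies in $L^1(dA_p)$ and the integral defining ${\bf B}_{p,0}\psi$ converges. Proposition~\ref{lem:Bp0BoundedOnto} then gives $\|{\bf B}_{p,0}\psi\|_{\calQ_p,*}\lesssim 1$ and ${\bf B}_{p,0}\psi\in\mathring{\calQ}_p$. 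Applying \eqref{eq:QpEmbeddingA} yields $c^*\Gamma^{(p)}_{\mu,N}c\lesssim C_{\mathring{\calQ}_p}(\mu)$. Taking the supremum over $c$ and $N$ gives $\calC_{p,\calR}(\mu)\lesssim C_{\mathring{\calQ}_p}(\mu)$.

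\emph{Sufficiency.} Assume $\calC_{p,\calR}(\mu)<\infty$. The first goal is
$$\int_\D|{\bf B}_{p,0}\psi|^2\,d\mu\lesssim \calC_{p,\calR}(\mu)\|\psi\|^2_{\calT_p},\qquad \psi\in\calT_p .$$
Given $\psi$, write $E_N^{(p)}\psi=\sum_R c^{(N)}_R\one_R$. Lemma~\ref{lem:QpWeightedAveraging} gives $\|c^{(N)}\|_{\calX_{p,N}}\le C_0\|\psi\|_{\calT_p}$ with $C_0$ independent of $N$. Hence, by homogeneity,
$$\int_\D|{\bf B}_{p,0}(E_N^{(p)}\psi)|^2\,d\mu\le C_0^2\,\calC_{p,\calR}(\mu)\|\psi\|^2_{\calT_p}\quad\text{for all } N .$$
Next we need pointwise convergence ${\bf B}_{p,0}(E^{(p)}_N\psi)(z)\to{\bf B}_{p,0}\psi(z)$ for each $z\in\D$. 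Since $\calT_p\subset L^2(dA_p)\subset L^1(dA_p)$, property (c) gives $E^{(p)}_N\psi\to\psi$ in $L^1(dA_p)$. For fixed $z$, $w\mapsto L_{{\bf B}_p}(z,w)$ is bounded on $\D$, by at most $c_p|z|(1-|z|)^{-2-p}$ as in the estimate following the definition of ${\bf B}_{p,0}$. Therefore
$$|{\bf B}_{p,0}(E^{(p)}_N\psi-\psi)(z)|\lesssim_z\|E^{(p)}_N\psi-\psi\|_{L^1(dA_p)}\to0 .$$
Fatou's lemma then gives the claimed estimate. Finally, for $f\in\mathring{\calQ}_p$ take $\psi=f'$. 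By Proposition~\ref{lem:Bp0BoundedOnto}, ${\bf B}_{p,0}\psi=f$ and $\|\psi\|_{\calT_p}\simeq\|f\|_{\calQ_p,*}$. This gives \eqref{eq:QpEmbeddingA} with $C_{\mathring{\calQ}_p}(\mu)\lesssim\calC_{p,\calR}(\mu)$.

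The only genuinely new ingredient compared with the Bloch case is the uniform $\calT_p$-stability of the dyadic averaging. In the Bloch case the corresponding bound $|c_R|\le\|\psi\|_\infty$ was trivial; here it is exactly Lemma~\ref{lem:QpWeightedAveraging}, which is already available. With that lemma in hand, the remaining points to check are routine: that the $\calX_{p,N}$ constraint is precisely the $\calT_p$ norm of the dyadic step function, and that the pointwise convergence uses $L^1(dA_p)$ rather than $L^1(dA)$ convergence. I therefore expect no serious obstacle beyond this bookkeeping.
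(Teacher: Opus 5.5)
Your proposal is correct and follows essentially the same route as the paper. For necessity you test the embedding on ${\bf B}_{p,0}\bigl(\sum_R c_R\one_R\bigr)$ using Proposition~\ref{lem:Bp0BoundedOnto}; for sufficiency you use the uniformly $\calT_p$-bounded averages $E_N^{(p)}\psi$ (Lemma~\ref{lem:QpWeightedAveraging}), pointwise convergence coming from $L^1(dA_p)$ convergence and the boundedness of $L_{{\bf B}_p}(z,\cdot)$, Fatou's lemma, and the representation $f={\bf B}_{p,0}f'$, with your explicit constant $C_0$ and choice $\psi=f'$ simply spelling out the paper's steps in slightly more detail.
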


\begin{proof}
The proof of this result is parallel to that of Theorem~\ref{boundednessPartI}. Assume first that $\calC_{p,\calR}(\mu)<+\infty$. We prove that
\begin{equation}\label{eq:Bp0L2SourceEstimate}
        \int_\D |{\bf B}_{p,0}\psi(z)|^2\,d\mu(z)
        \lesssim
        \calC_{p,\calR}(\mu)\|\psi\|_{\calT_p}^2,
        \qquad \psi\in\calT_p.
\end{equation}
By homogeneity, assume $\|\psi\|_{\calT_p}\le1$. Let $E_N^{(p)}\psi$ be the
weighted conditional expectation as in \eqref{eq:weightedEN}, and write
$$
        E_N^{(p)}\psi
        =
        \sum_{R\in\calR_N}a_R^{(N)}\one_R.
$$
By Lemma~\ref{lem:QpWeightedAveraging},
$$
        \|a^{(N)}\|_{\calX_{p,N}}
        =
        \|E_N^{(p)}\psi\|_{\calT_p}
        \lesssim \| \psi \|_{\calT_p} \le 1.
$$
Hence, by the definition of $\calC_{p,\calR}(\mu)$ and homogeneity,
$$
\begin{aligned}
        \int_\D
        \left|
        {\bf B}_{p,0}(E_N^{(p)}\psi)(z)
        \right|^2
        \,d\mu(z)
        &=
        \int_\D
        \left|
        \sum_{R\in\calR_N}a_R^{(N)}\Psi_R^{(p)}(z)
        \right|^2
        \,d\mu(z)                                                   \\
        &\lesssim
        \calC_{p,\calR}(\mu).
\end{aligned}
$$
Since $E_N^{(p)}\psi\to\psi$ in $L^1(dA_p)$, and since for each fixed
$z\in\D$ the function $w\mapsto L_{{\bf B}_p}(z,w)$ is bounded on $\D$, we have
$$
        {\bf B}_{p,0}(E_N^{(p)}\psi)(z)
        \longrightarrow
        {\bf B}_{p,0}\psi(z),
        \qquad z\in\D.
$$
Fatou's lemma gives \eqref{eq:Bp0L2SourceEstimate}.

\vspace{0.1cm}

Now let $f\in\mathring{\calQ}_p$. By Lemma~\ref{lem:Bp0BoundedOnto}, there
exists $\psi\in\calT_p$ such that
$$
f={\bf B}_{p,0}\psi, \qquad \|\psi\|_{\calT_p} \lesssim \|f\|_{\calQ_p,*}.
$$
Applying \eqref{eq:Bp0L2SourceEstimate}, we obtain
$$
        \int_\D |f(z)|^2\,d\mu(z)
        \lesssim \calC_{p,\calR}(\mu)\|\psi\|^2_{\calT_p} \lesssim 
        \calC_{p,\calR}(\mu)\|f\|_{\calQ_p,*}^2.
$$
Thus
$$
        C_{\mathring{\calQ}_p}(\mu)
        \lesssim
        \calC_{p,\calR}(\mu).
$$

\medskip 

Conversely, assume that \eqref{eq:QpEmbeddingA} holds. Fix $N\ge0$ and a
coefficient vector $c=\{c_R\}_{R\in\calR_N}$ with
$\|c\|_{\calX_{p,N}}\le1$. Put
$$
        \psi_c:=\sum_{R\in\calR_N} c_R\one_R.
$$
Then $\|\psi_c\|_{\calT_p}\le1$, and Lemma~\ref{lem:Bp0BoundedOnto} gives
$$
        \|{\bf B}_{p,0}\psi_c\|_{\calQ_p,*}
        \lesssim1.
$$
Therefore
$$
\begin{aligned}
        \int_\D
        \left|
        \sum_{R\in\calR_N}c_R \Psi_R^{(p)}(z)
        \right|^2
        \,d\mu(z)
        &=
        \int_\D |{\bf B}_{p,0}\psi_c(z)|^2\,d\mu(z)                    \\
        &\lesssim
        C_{\mathring{\calQ}_p}(\mu).
\end{aligned}
$$
Taking the supremum over $N$ and over all such $c$ gives
$$
        \calC_{p,\calR}(\mu)
        \lesssim
        C_{\mathring{\calQ}_p}(\mu).
$$
This proves the equivalence and the comparison of constants.
\end{proof}

\subsection{Boundedness for the $\calQ_p$--Carleson measure problem: Part II}

We now introduce the $\calQ_p$--capacity and prove that it is equivalent to the
dyadic $p$--capacity $\calC_{p,\calR}(\mu)$. For each arc $I\subset\T$, define the nonnegative sequence
$$
        a_{I,N}^{(p)}
        :=
        \left\{
        A_p(R_j\cap Q_I)
        \right\}_{1\le j\le \#\calR_N}.
$$
We then set
\begin{equation}\label{eq:AINDef}
        A_{I,N}^{(p)}
        :=
        \diag\left(a_{I,N}^{(p)}\right)
        =
        \diag\left(
        A_p(R_1\cap Q_I),\ldots,A_p(R_{\#\calR_N}\cap Q_I)
        \right).
\end{equation}

By \eqref{eq:XpnTentConstraint}, the condition
$\|c\|_{\calX_{p,N}}\le1$ is equivalent to
\begin{equation}\label{eq:SourceConstraintMatrix}
        c^*A_{I,N}^{(p)}c
        \le
        |I|^p
        \qquad\textnormal{for every arc }I\subset\T.
\end{equation}

\begin{defn}\label{def:QpMatrixCapacity}
Let $\mu$ be a finite positive Borel measure on $\D$. The \emph{$\calQ_p$--capacity of
$\mu$} associated with the polar dyadic resolution $\calR$ is defined by
\begin{equation}\label{eq:QpMatrixCapacity}
        \mathfrak Q_{p,\calR}(\mu)
        :=
        \sup_{N\ge0}
        \inf
        \left\{
        \sum_{m=1}^M\lambda_m|I_m|^p:
        \begin{array}{l}
         M \in \N, \; \lambda_m\ge0,\ I_m\subset\T\textnormal{ arcs}, \\[2mm]
        \displaystyle
        \Gamma_{\mu,N}^{(p)}
        \le
        \sum_{m=1}^M\lambda_m A_{I_m,N}^{(p)}
        \end{array}
        \right\}.
\end{equation}
Here the matrix inequality is understood in the sense of positive semidefinite
domination.
\end{defn}

\begin{thm}\label{thm:QpBoundedPartII}
Let $0<p\le1$, let $\mu$ be a finite positive Borel measure on $\D$, and let
$\calR=\{\calR_N\}_{N\ge0}$ be the polar dyadic resolution defined in
Section~\ref{20260616Sec01}. Then
$$
        \calC_{p,\calR}(\mu)
        \simeq
        \mathfrak Q_{p,\calR}(\mu).
$$
\end{thm}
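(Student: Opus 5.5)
The plan is to prove the two inequalities separately, level by level in $N$, as in Section~\ref{Sec4}. For fixed $N$ write $\calC_{p,N}(\mu):=\sup_{\|c\|_{\calX_{p,N}}\le1}c^*\Gamma^{(p)}_{\mu,N}c$. Let $\mathfrak Q_{p,N}(\mu)$ be the infimum inside \eqref{eq:QpMatrixCapacity}. By Remark~\ref{20260615rem01} we may assume all entries of $\Gamma^{(p)}_{\mu,N}$ are finite.

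\emph{Easy direction.} Suppose $\Gamma^{(p)}_{\mu,N}\le\sum_m\lambda_mA^{(p)}_{I_m,N}$. Take any $c$ with $\|c\|_{\calX_{p,N}}\le1$. By \eqref{eq:SourceConstraintMatrix} we have $c^*A^{(p)}_{I_m,N}c\le|I_m|^p$ for each $m$. Hence $c^*\Gamma^{(p)}_{\mu,N}c\le\sum_m\lambda_m|I_m|^p$. Taking the supremum over $c$ and the infimum over admissible $(\lambda_m,I_m)$ gives $\calC_{p,N}(\mu)\le\mathfrak Q_{p,N}(\mu)$, with constant $1$.

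\emph{Reverse direction, Step 1: duality.} I will first show that
$$
\mathfrak Q_{p,N}(\mu)=\sup\Bigl\{\operatorname{Tr}(\Gamma^{(p)}_{\mu,N}X):\ X\ge0,\ \operatorname{Tr}(A^{(p)}_{I,N}X)\le|I|^p\ \textnormal{for every arc } I\Bigr\}.
$$
Note that $\operatorname{Tr}(A^{(p)}_{I,N}X)=\sum_RX(R,R)A_p(R\cap Q_I)$, so the constraint only involves the diagonal of $X$. Weak duality ($\ge$) is the same Lagrangian computation as in Section~\ref{20260617Sec02}.

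For strong duality there are infinitely many constraint matrices, so this is a semi-infinite SDP. I would handle it by Hahn--Banach separation in the finite-dimensional space of Hermitian matrices. Consider the convex cone
$$
\mathcal K:=\Bigl\{\Bigl(\sum_m\lambda_mA^{(p)}_{I_m,N}-Y,\ \sum_m\lambda_m|I_m|^p\Bigr):\ Y\ge0,\ \lambda_m\ge 0\Bigr\}.
$$
Its closedness should follow from two facts. First, the normalized matrices $|I|^{-p}A^{(p)}_{I,N}$ range over a bounded set, since $A_p(R\cap Q_I)\le A_p(Q_I)\lesssim |I|^{p+2}$. Second, $I\mapsto A^{(p)}_{I,N}$ is continuous in the center and length of $I$. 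Its closure is thus generated by a compact set not containing $0$, so the cone is closed.

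Strict feasibility follows by taking $I=\T$: then $A^{(p)}_{\T,N}=\diag(A_p(R))$ is positive definite, and $\lambda$ large gives strict domination of $\Gamma^{(p)}_{\mu,N}$. Separating $(\Gamma^{(p)}_{\mu,N},\,\mathfrak Q_{p,N}(\mu)-\varepsilon)$ from $\mathcal K$ then produces the dual matrix $X$. I expect this step to be the main technical obstacle. It is where the infinitude of arcs must be controlled, through compactness of the arc parameters together with continuity of $A_p(R\cap Q_I)$.

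\emph{Reverse direction, Step 2: weighted Grothendieck.} Let $X$ be dual feasible and set $w_R:=X(R,R)^{1/2}$. The constraint says exactly $\|w\|_{\calX_{p,N}}\le1$, by \eqref{eq:XpnTentConstraint}. Write $X$ as a Gram matrix, $X(R_2,R_1)=\langle u_{R_1},u_{R_2}\rangle$ with $\|u_R\|=w_R$, and put $v_R:=u_R/w_R$ (with $v_R:=0$ when $w_R=0$), so that $\|v_R\|\le1$. Then
$$
\operatorname{Tr}(\Gamma^{(p)}_{\mu,N}X)=\sum_{R_1,R_2}\bigl(w_{R_1}\Gamma^{(p)}_{\mu,N}(R_1,R_2)w_{R_2}\bigr)\langle v_{R_1},v_{R_2}\rangle .
$$
Apply the complex Grothendieck inequality to the matrix $D_w\Gamma^{(p)}_{\mu,N}D_w$, where $D_w:=\diag(w)$. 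This bounds the trace by $K_G\sup_{|\alpha_R|,|\beta_R|\le1}|B(w\alpha,w\beta)|$, where $B$ is the sesquilinear form of $\Gamma^{(p)}_{\mu,N}$. By Cauchy--Schwarz for the positive semidefinite form $B$, this is at most $K_G\sup_{|\alpha_R|\le1}B(w\alpha,w\alpha)$.

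Now $c:=w\alpha$ satisfies $|c_R|\le w_R$. The tent norm \eqref{eq:XpnTentConstraint} is monotone in the moduli $|c_R|$, so $\|c\|_{\calX_{p,N}}\le\|w\|_{\calX_{p,N}}\le1$. Hence $\operatorname{Tr}(\Gamma^{(p)}_{\mu,N}X)\le K_G\,\calC_{p,N}(\mu)$. Taking the supremum over dual-feasible $X$ and then over $N$ gives $\mathfrak Q_{p,\calR}(\mu)\lesssim\calC_{p,\calR}(\mu)$.
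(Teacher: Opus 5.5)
Your proposal is correct and follows the paper's proof. It has the same easy direction, the same dual program \eqref{eq:QpSDP}, and the same reverse argument: Gram vectors, weights $X(R,R)^{1/2}$ (the paper's ${\bf x}_R$), Grothendieck, Cauchy--Schwarz, and monotonicity of the $\calX_{p,N}$ norm in $|c_R|$. The only difference is that you justify strong duality for this semi-infinite program, using closedness of the cone from $|I|^{-p}A^{(p)}_{I,N}\to 0$ and positive definiteness of $A^{(p)}_{\T,N}$ to normalize the separating functional, whereas the paper simply invokes conic duality; your closedness step is sound once one notes that the limiting generator $(0,1)$ already lies in $\mathcal K$ (take $Y=\lambda A^{(p)}_{I,N}$).
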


\begin{proof}
Fix $N\ge0$ and write
$$
        \calC_{p,\calR,N}(\mu)
        :=
        \sup_{\|c\|_{\calX_{p,N}}\le1}
        c^*\Gamma_{\mu,N}^{(p)}c.
$$
Also set
\begin{equation} \label{20260617eq02}
        \delta_{p,\calR,N}(\mu)
        :=
        \inf
        \left\{
        \sum_{m=1}^M\lambda_m|I_m|^p:
        \begin{array}{l}
        M\in\N,\ \lambda_m\ge0,\ I_m\subset\T\textnormal{ arcs}, \\[1mm]
        \displaystyle
        \Gamma_{\mu,N}^{(p)}
        \le
        \sum_{m=1}^M\lambda_m A_{I_m,N}^{(p)}
        \end{array}
        \right\}.
\end{equation} 
Then
$$
        \calC_{p,\calR}(\mu)=\sup_{N\ge0}\calC_{p,\calR,N}(\mu),
        \qquad \textrm{and} \qquad 
        \mathfrak Q_{p,\calR}(\mu)=\sup_{N\ge0}\delta_{p,\calR,N}(\mu).
$$

First assume that
$$
        \Gamma_{\mu,N}^{(p)}
        \le
        \sum_{m=1}^M\lambda_m A_{I_m,N}^{(p)}.
$$
If $\|c\|_{\calX_{p,N}}\le1$, then by \eqref{eq:SourceConstraintMatrix},
$$
\begin{aligned}
        c^*\Gamma_{\mu,N}^{(p)}c
        &\le
        \sum_{m=1}^M\lambda_m c^*A_{I_m,N}^{(p)}c  \\
        &\le
        \sum_{m=1}^M\lambda_m|I_m|^p.
\end{aligned}
$$
Taking the supremum over all admissible $c$ and then the infimum over all
dominations satisfying \eqref{20260617eq02} gives
$$
        \calC_{p,\calR,N}(\mu)
        \le
        \delta_{p,\calR,N}(\mu).
$$
Taking the supremum over $N$ yields
$$
        \calC_{p,\calR}(\mu)
        \le
        \mathfrak Q_{p,\calR}(\mu).
$$

\medskip

We next prove the reverse inequality. Following the same Lagrange multiplier argument as in the Bloch case; see
Section~\ref{20260617Sec02}, and recalling that the infimum in
\eqref{20260617eq02} is taken over all finite choices of $M$, arcs
$I_1,\ldots,I_M$, and coefficients $\lambda_1,\ldots,\lambda_M\ge0$, conic
duality for semidefinite programs\footnote{Here, conic duality refers to the usual Lagrange duality for optimization problems with matrix inequalities,
where the order is induced by the cone of positive semidefinite Hermitian
matrices. The dual variable is a positive semidefinite matrix, paired with the
matrix constraint through the trace pairing.} gives
\begin{equation}\label{eq:QpSDP}
\delta_{p,\calR,N}(\mu)
=
\sup \left\{
\operatorname{Tr}(\Gamma_{\mu,N}^{(p)}X):
X\ge0,\
\operatorname{Tr}(A_{I,N}^{(p)}X)\le |I|^p
\textnormal{ for every arc }I\subset\T
\right\}.
\end{equation}

As in the Bloch case, it remains to estimate
$\operatorname{Tr}(\Gamma_{\mu,N}^{(p)}X)$ for matrices $X$ satisfying the
constraints in \eqref{eq:QpSDP}. Since $X\ge0$, there are vectors
$\{u_R\}_{R\in\calR_N}$ in $\C^{\# \calR_N}$ such that
$$
        X(R_2,R_1)=\langle u_{R_1},u_{R_2}\rangle_{\C^{\# \calR_N}}.
$$
Set
$$
        {\bf x}_R:=\|u_R\|_{\C^{\# \calR_N}}.
$$
When ${\bf x}_R>0$, write $v_R:=u_R/{\bf x}_R$; if ${\bf x}_R=0$, choose any unit vector $v_R$.
The constraints in \eqref{eq:QpSDP} imply that, for every arc $I\subset\T$,
$$
        \sum_{R\in\calR_N}{\bf x}_R^2A_p(R\cap Q_I)
        =
        \operatorname{Tr}(A_{I,N}^{(p)}X)
        \le
        |I|^p.
$$
Therefore the nonnegative sequence ${\bf x}:=\{{\bf x}_R\}_{R\in\calR_N}$ satisfies
$$
        \|{\bf x}\|_{\calX_{p,N}}\le1.
$$

Define
$$
        \Xi_{\mu, N}^{(p)}(R_1,R_2)
        :=
        \Gamma_{\mu,N}^{(p)}(R_1,R_2){\bf x}_{R_1}{\bf x}_{R_2}.
$$
Since $\Gamma_{\mu,N}^{(p)}$ is positive semidefinite, so is $\Xi_{\mu, N}^{(p)}$. Moreover,

\begin{align} \label{20260617eq40}
        \operatorname{Tr}(\Gamma_{\mu,N}^{(p)}X)
        &=\sum_{R_1,R_2\in\calR_N}
        \Gamma_{\mu,N}^{(p)}(R_1,R_2)
        X(R_2, R_1)  \nonumber \\
        &=
        \sum_{R_1,R_2\in\calR_N}
        \Gamma_{\mu,N}^{(p)}(R_1,R_2)
        \langle u_{R_1},u_{R_2}\rangle_{\C^{\# \calR_N}}  \nonumber                                \\
        &=
        \sum_{R_1,R_2\in\calR_N}
        \Xi_{\mu, N}^{(p)}(R_1,R_2)\langle v_{R_1},v_{R_2}\rangle_{\C^{\# \calR_N}}.
\end{align}

By the finite-dimensional complex Grothendieck inequality,
\begin{equation} \label{20260617eq41}
\left|
        \sum_{R_1,R_2\in\calR_N}
        \Xi_{\mu, N}^{(p)}(R_1,R_2)\langle v_{R_1},v_{R_2}\rangle_{\C^{\# \calR_N}}
\right|
\lesssim
\sup_{|\alpha_R|,|\beta_R|\le1}
\left|
        \sum_{R_1,R_2\in\calR_N}
        \Xi_{\mu, N}^{(p)}(R_1,R_2)\alpha_{R_1}\overline{\beta_{R_2}}
\right|.
\end{equation} 
Since $\Xi_{\mu, N}^{(p)}$ is positive semidefinite, Cauchy--Schwarz for positive
semidefinite sesquilinear forms gives
\begin{equation} \label{20260617eq42}
\sup_{|\alpha_R|, \; |\beta_R|\le1}
\left|
        \sum_{R_1,R_2\in\calR_N}
        \Xi_{\mu, N}^{(p)}(R_1,R_2)\alpha_{R_1}\overline{\beta_{R_2}}
\right|
\le
\sup_{|\alpha_R|\le1}
        \sum_{R_1,R_2\in\calR_N}
        \Xi_{\mu, N}^{(p)}(R_1,R_2)\alpha_{R_1}\overline{\alpha_{R_2}}.
\end{equation} 
For each choice of $\alpha=\{\alpha_R\}$ with $|\alpha_R|\le1$,
set
$$
c_R:={\bf x}_R\overline{\alpha_R}.
$$
Since $\|{\bf x}\|_{\calX_{p,N}}\le1$ and the $\calX_{p,N}$ norm depends only on
$|c_R|$, we also have $\|c\|_{\calX_{p,N}}\le1$. 
Furthermore, for each such choice of $\alpha$,
\begin{align} \label{20260617eq43}
        \sum_{R_1,R_2\in\calR_N}
        \Xi_{\mu,N}^{(p)}(R_1,R_2)
        \alpha_{R_1}\overline{\alpha_{R_2}}
        &=
        \sum_{R_1,R_2\in\calR_N}
        \Gamma_{\mu,N}^{(p)}(R_1,R_2)
        {\bf x}_{R_1}{\bf x}_{R_2}
        \alpha_{R_1}\overline{\alpha_{R_2}}      \nonumber    \\
        &=
        c^*\Gamma_{\mu,N}^{(p)}c
        \le
        \calC_{p,\calR,N}(\mu).
\end{align}
Combining \eqref{20260617eq40}, \eqref{20260617eq41}, \eqref{20260617eq42}, and \eqref{20260617eq43}, we derive that
$$
        \operatorname{Tr}(\Gamma_{\mu,N}^{(p)}X)
        \lesssim
        \calC_{p,\calR,N}(\mu).
$$
Taking the supremum over all $X$ satisfying the constraints in \eqref{eq:QpSDP} gives
$$
        \delta_{p,\calR,N}(\mu)
        \lesssim
        \calC_{p,\calR,N}(\mu).
$$
Finally, taking the supremum over $N$ completes the proof.
\end{proof}

Combining Theorems~\ref{thm:QpBoundedPartI} and \ref{thm:QpBoundedPartII}, we
obtain the following boundedness characterization.

\begin{thm}\label{thm:QpBoundednessMain}
Let $0<p\le1$, let $\mu$ be a finite positive Borel measure on $\D$, and let
$\calR=\{\calR_N\}_{N\ge0}$ be the polar dyadic resolution defined in
Section~\ref{20260616Sec01}. Then
$$
        id:\mathring{\calQ}_p\longrightarrow L^2(\mu)
$$
is bounded if and only if
$$
        \mathfrak Q_{p,\calR}(\mu)<+\infty.
$$
\end{thm}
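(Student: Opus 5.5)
This follows by chaining the two preceding results; no new estimate is needed. Theorem~\ref{thm:QpBoundedPartI} shows that, for the polar dyadic resolution $\calR$ of Section~\ref{20260616Sec01}, the seminorm embedding \eqref{eq:QpEmbeddingA} holds if and only if $\calC_{p,\calR}(\mu)<+\infty$. It also gives $\calC_{p,\calR}(\mu)\simeq C_{\mathring{\calQ}_p}(\mu)$. Theorem~\ref{thm:QpBoundedPartII} gives $\calC_{p,\calR}(\mu)\simeq \mathfrak Q_{p,\calR}(\mu)$ with constants depending only on $p$ and $\calR$. Combining these yields
$$
C_{\mathring{\calQ}_p}(\mu)\simeq \mathfrak Q_{p,\calR}(\mu),
$$
so boundedness of $id:\mathring{\calQ}_p\to L^2(\mu)$ is equivalent to $\mathfrak Q_{p,\calR}(\mu)<+\infty$.

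Concretely, I would argue in two directions.

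\emph{Sufficiency.} Assume $\mathfrak Q_{p,\calR}(\mu)<\infty$. The easy half of Theorem~\ref{thm:QpBoundedPartII} gives $\calC_{p,\calR}(\mu)\le \mathfrak Q_{p,\calR}(\mu)$. The sufficiency half of Theorem~\ref{thm:QpBoundedPartI} then gives the embedding, with constant $\lesssim \mathfrak Q_{p,\calR}(\mu)$.

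\emph{Necessity.} If the embedding is bounded, Theorem~\ref{thm:QpBoundedPartI} gives $\calC_{p,\calR}(\mu)\lesssim C_{\mathring{\calQ}_p}(\mu)<\infty$. The duality half of Theorem~\ref{thm:QpBoundedPartII} then yields $\mathfrak Q_{p,\calR}(\mu)\lesssim \calC_{p,\calR}(\mu)<\infty$.

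The only point needing care is the convention of Remark~\ref{20260615rem01}, as adapted in the footnote to Definition~\ref{def:QpSourceCapacity}. In the necessity direction I must check that every diagonal entry of $\Gamma^{(p)}_{\mu,N}$ is finite, so that the semidefinite program \eqref{eq:QpSDP} really is finite-dimensional and the duality in Theorem~\ref{thm:QpBoundedPartII} applies. This follows from the embedding. Each packet $\Psi^{(p)}_R={\bf B}_{p,0}\one_R$ lies in $\mathring{\calQ}_p$ by Proposition~\ref{lem:Bp0BoundedOnto}, because $\one_R\in\calT_p$: indeed $A_p(R\cap Q_I)\lesssim |I|^{p+1}\le |I|^p$. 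Hence $\int_\D|\Psi^{(p)}_R|^2\,d\mu<\infty$. In the sufficiency direction the convention is automatic: finiteness of $\mathfrak Q_{p,\calR}(\mu)$ already rules out infinite diagonal entries.

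No step is a genuine obstacle. The substantive work, namely the uniform $\calT_p$-stability of $E^{(p)}_N$ (Lemma~\ref{lem:QpWeightedAveraging}) and the SDP/Grothendieck duality, was done in the two preceding theorems. If desired, one can add the remark that boundedness of the full embedding $id:\calQ_p\to L^2(\mu)$ is equivalent to $\mu(\D)+\mathfrak Q_{p,\calR}(\mu)<\infty$. This follows by splitting $f=f(0)+(f-f(0))$ exactly as in the proof of \eqref{20260614eq99}.
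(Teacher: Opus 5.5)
Your proposal is correct and matches the paper, which obtains this theorem directly by combining Theorem~\ref{thm:QpBoundedPartI} ($\calC_{p,\calR}(\mu)\simeq C_{\mathring{\calQ}_p}(\mu)$) with Theorem~\ref{thm:QpBoundedPartII} ($\calC_{p,\calR}(\mu)\simeq\mathfrak Q_{p,\calR}(\mu)$). Your extra check that bounded embedding forces the diagonal entries of $\Gamma^{(p)}_{\mu,N}$ to be finite (via $\one_R\in\calT_p$ and Proposition~\ref{lem:Bp0BoundedOnto}) is valid bookkeeping that the paper leaves implicit in its convention.
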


Consequently, for a finite positive Borel measure $\mu$ on $\D$,
$$
        id:\calQ_p\longrightarrow L^2(\mu)
        \quad\textnormal{is bounded}
        \quad\Longleftrightarrow\quad
        \mu(\D)+\mathfrak Q_{p,\calR}(\mu)<+\infty.
$$

\medskip 

\subsection{Compactness for the $\calQ_p$--Carleson measure problem}

Finally, we treat the compactness part. Recall that for $0<\rho<1$, we denote
$S_\rho:=\{z\in\D: |z|>\rho\}$.

\begin{thm}\label{thm:QpCompactness}
Let $0<p\le1$, let $\mu$ be a finite positive Borel measure on $\D$, and let
$\calR=\{\calR_N\}_{N\ge0}$ be the polar dyadic resolution defined in
Section~\ref{20260616Sec01}. Then
$$
        id:\mathring{\calQ}_p\longrightarrow L^2(\mu)
$$
is compact if and only if
$$
        \calC_{p,\calR}(\mu)<+\infty
        \qquad\textnormal{and}\qquad
        \lim_{\rho\to1^-}\calC_{p,\calR}(\one_{S_\rho}\mu)=0.
$$
Equivalently, this holds if and only if
$$
        \mathfrak Q_{p,\calR}(\mu)<+\infty
        \qquad\textnormal{and}\qquad
        \lim_{\rho\to1^-}\mathfrak Q_{p,\calR}(\one_{S_\rho}\mu)=0.
$$
\end{thm}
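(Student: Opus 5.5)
The plan is to follow the proof of Theorem~\ref{compactness} line by line. Theorem~\ref{thm:QpBoundedPartI} plays the role of Theorem~\ref{boundednessPartI}, and Theorem~\ref{thm:QpBoundedPartII} plays the role of Theorem~\ref{boundednessPartII}. The second equivalence is immediate once the first is proved. Both theorems apply to every finite positive Borel measure, in particular to $\one_{S_\rho}\mu$ and $\mu$. They give
$$
\calC_{p,\calR}(\nu)\simeq \mathfrak Q_{p,\calR}(\nu)\simeq C_{\mathring{\calQ}_p}(\nu),
$$
with constants independent of $\nu$. Hence finiteness and vanishing as $\rho\to1^-$ transfer between the two capacities. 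So it suffices to prove the statement for $\calC_{p,\calR}$, or equivalently for the best constants $C_\rho:=C_{\mathring{\calQ}_p}(\one_{S_\rho}\mu)$.

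First I would record the compactness criterion. The embedding $id:\mathring{\calQ}_p\to L^2(\mu)$ is compact if and only if $\|f_n\|_{L^2(\mu)}\to0$ for every sequence bounded in $\mathring{\calQ}_p$ that converges to $0$ uniformly on compact sets. This needs the unit ball of $\mathring{\calQ}_p$ to be compact in the compact--open topology. Since $\calQ_p\subset\calB$ with $\|f\|_{\calB,*}\lesssim\|f\|_{\calQ_p,*}$, the ball is locally bounded, so Montel gives subsequential limits. The seminorm is lower semicontinuous under local uniform convergence by Fatou's lemma applied to $\int|f_n'|^2g(z,a)^p\,dA$ for each fixed $a$. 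Hence limits stay in the ball.

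\emph{Necessity.} Assume the embedding is compact. Boundedness and Theorem~\ref{thm:QpBoundedPartI} give $\calC_{p,\calR}(\mu)<\infty$. For the vanishing, I argue by contradiction. Suppose there are $\epsilon_0>0$, $\rho_n\to1$, and $f_n$ with $\|f_n\|_{\calQ_p,*}\le1$ and $\int_{S_{\rho_n}}|f_n|^2d\mu\ge\epsilon_0$. Pass to a subsequence converging locally uniformly to some $f$ in the ball. Applying the criterion to $f_n-f$ gives $\|f_n-f\|_{L^2(\mu)}\to0$. Also $\int_{S_{\rho_n}}|f|^2d\mu\to0$ by dominated convergence, since $f\in L^2(\mu)$. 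The triangle inequality then contradicts the lower bound $\epsilon_0$. Thus $C_\rho\to0$, and so $\calC_{p,\calR}(\one_{S_\rho}\mu)\to0$.

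\emph{Sufficiency.} Let $\sup_n\|f_n\|_{\calQ_p,*}\le M$ with $f_n\to0$ locally uniformly. Given $\epsilon$, choose $\rho$ so that $\calC_{p,\calR}(\one_{S_\rho}\mu)M^2<\epsilon$. Theorem~\ref{thm:QpBoundedPartI}, applied to the measure $\one_{S_\rho}\mu$, bounds $\int_{S_\rho}|f_n|^2d\mu\lesssim\epsilon$ uniformly in $n$. On $\{|z|\le\rho\}$, uniform convergence together with $\mu(\D)<\infty$ sends the remaining integral to $0$. Since $\epsilon$ is arbitrary, $\|f_n\|_{L^2(\mu)}\to0$, and the criterion gives compactness.

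The only point needing care is the uniformity of the implicit constants in Theorems~\ref{thm:QpBoundedPartI} and \ref{thm:QpBoundedPartII} with respect to the measure. Their proofs use only Lemma~\ref{lem:QpWeightedAveraging}, Proposition~\ref{lem:Bp0BoundedOnto}, and the Grothendieck constant. None of these depends on $\mu$, so the constants are uniform. The $\rho$-dependence therefore passes cleanly through the equivalences, and I expect no real obstacle beyond checking this.
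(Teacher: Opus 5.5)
Your proposal is correct and follows the paper's approach. The paper only says the proof is parallel to Theorem~\ref{compactness}, with $\calC_{p,\calR}$ (equivalently $\mathfrak Q_{p,\calR}$) replacing the Bloch capacity, and omits the details; you carry out that transcription, and you also check two points the paper leaves implicit: that the unit ball of $\mathring{\calQ}_p$ is compact in the compact--open topology (Montel plus Fatou), and that the constants in Theorems~\ref{thm:QpBoundedPartI} and \ref{thm:QpBoundedPartII} do not depend on the measure.
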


\begin{proof}
The proof is parallel to that of Theorem~\ref{compactness}. The only change is
that the Bloch capacity is replaced by $\calC_{p,\calR}$, or equivalently by
the $\calQ_p$--capacity $\mathfrak Q_{p,\calR}$. We therefore omit the details.
\end{proof}

Consequently, for a positive Borel measure $\mu$ on $\D$,
$$
        id:\calQ_p\longrightarrow L^2(\mu)
$$
is compact if and only if
$$
        \mu(\D)+\mathfrak Q_{p,\calR}(\mu)<+\infty
        \qquad\textnormal{and}\qquad
        \lim_{\rho\to1^-}\mathfrak Q_{p,\calR}(\one_{S_\rho}\mu)=0.
$$
This completes the compactness characterization for the $\calQ_p$--Carleson measure problem.

\end{document}